\renewcommand{\leq}{\leqslant}
\newtheorem{Def}{Definition}
\newtheorem{Thm}{Theorem}
\newtheorem{Cor}[Thm]{Corollary}
\newtheorem{Lem}[Thm]{Lemma}
\newtheorem{Pro}[Thm]{Proposition}
\address{Department of Knowledge-Based Mathematical Systems, Johannes Kepler Universit\"at, Austria}
\DeclareMathSymbol{\lsim}{\mathord}{symbols}{"18}
\DeclareMathSymbol{\lsim}{\mathord}{symbols}{"18}
\title{Infinitary propositional relevant languages with absurdity}
\author{Guillermo Badia}
\date{}                                           
\begin{document}
\maketitle
\begin{abstract} Analogues of Scott's isomorphism theorem, Karp's theorem as well as  results on lack of compactness and strong completeness are established for infinitary propositional relevant logics. An ``interpolation theorem" (of a particular sort introduced by Barwise and van Benthem)  for the infinitary quantificational boolean logic $L_{\infty \omega}$ holds. This yields a preservation result characterizing the expressive power of infinitary relevant languages with absurdity using the model-theoretic relation of relevant directed bisimulation as well as a Beth definability property.
\bigskip

\emph{ Keywords}: relevant logic, model theory, infinitary logic, interpolation,  Routley-Meyer semantics.

  \end{abstract}

\section{Introduction}\label{sec:int}

In these pages we explore the model theory of a twofold non-classical logic:  infinitary relevant propositional  logic. By extending the language of relevant logic by adding infinitary conjunctions and disjunctions, we naturally gain some expressive power.  Such extensions have been toyed with from time to time in the context of relevant logic in an unsystematic and informal way (cf. \cite{rou, fine, fine2}). In \cite{rou} (p. 336), Routley reports some unpublished (and, according to him, not overly successful) attempts to study infinitary relevant logic. 

We will be working  in the well-known Routley-Meyer semantics  \cite{rm, rm1, rm2, r, dunn}.  This is the more or less standard non-algebraic semantics for relevant logic (\cite{thomas, robles} are examples of quite recent applications). The reader can find a survey of the alternatives in \cite{dunn}, though.\footnote{\cite{mares2} is a recent contender for the quantificational case, where incompleteness had been found by Fine \cite{fine2}.}

 Though the heyday of infinitary logic seems to be long gone, important results remain. In the next sections, we will  obtain relevant analogues of some of them such as Karp's theorem or Scott's isomorphism theorem. Karp's theorem (Corollary  3.5.3 in \cite{hodges}) is the claim that for any two models, $L_{\infty \omega}$-equivalence is the same as the existence of a family of partial isomorphisms with the back and forth properties.  Scott's isomorphism theorem (Corollary 3.5.4 in \cite{hodges}) says that, for denumerable models, making a single special formula true suffices to characterize a structure up to isomorphism.

The main problem we will solve here, though,  is that of characterizing the expressive power of infinitary relevant logic. This will be accomplished by establishing a generalized interpolation result for the classical infinitary logic $L_{\infty \omega}$, from which the desired characterization will follow in the form of a preservation theorem involving relevant directed bisimulations. On a historical note,  directed bisimulations were introduced in \cite{kurto} and though it was hinted there, it seems like \cite{restall} is the first time they were applied to the study of substructural  logics in print. Recently, they have been shown to have a fundamental place in the model theory of relevant logic in the Routley-Meyer semantic framework (cf. \cite{ba}, where the finitary case has been studied) analogous to bisimulations in the Kripke semantics for modal logic.

The results on expressive power in this paper can be seen as a continuation of the work in \cite{ba}, turning our attention this time to the realm of infinitary languages. There are certain differences in method worth mentioning, though. In  \cite{ba}, there was an appeal to the machinery of saturated models in order to establish a preservation theorem characterizing relevant formulas as a fragment of first order logic. This was, in fact, unnecessary for a much direct proof through a simple application of the compactness theorem of first order logic was possible. It would have simply require to introduce the notion of a relevant directed $n$-bisimulation, a finite approximation of a relevant directed bisimulation.\footnote{Incidentally, this is how the main result of \cite{ba2} characterizing the expressivity of propositional bi-intuitionistic languages was obtained.} This approach is so basic that generalizes to logics having some minimal forms of compactness such as $L_{\infty \omega}$. That is the main motivation behind our introduction of 
relevant directed $\alpha$-bisimulations in Definition \ref{def:ndbt}.

 In \S \ref{sec:rm}, we introduce the Routley-Meyer semantics for infinitary propositional relevant languages with absurdity. In \S \ref{sec:inc}, we show that infinitary relevant languages with absurdity are, in general, lacking compactness and most reasonable formal systems based on them are not strongly complete.  In \S \ref{sec:db}, we define relevant directed bisimulations establishing some basic propositions, including a relevant Karp theorem while in \S \ref{sec:s}, we prove a relevant analogue of Scott's isomorphism theorem.  In \S \ref{sec:pre}, we prove an interpolation theorem for the infinitary quantificational boolean logic $L_{\infty \omega}$ which implies  a preservation theorem saying that the formulas of $L_{\infty \omega}$  preserved under relevant directed bisimulations are exactly infinitary relevant formulas, as well as a Beth definability result. Finally, in \S \ref{sec:con} we briefly summarize our work.

\section{Routley-Meyer Semantics}\label{sec:rm}

In this section, we will review the Routley-Meyer semantics for propositional infinitary relevant languages with absurdity and their embeddability  in more traditional infinitary languages. 

 Let $\kappa$ be some infinite cardinal. An \emph{infinitary relevant language with absurdity}  $L_{\kappa \omega}^{\rightarrow}$ contains  a possibly finite list $\mathtt{PROP}$  of propositional variables $p, q, r \dots$  and  the logical symbols $\bot$ (an absurdity constant), $\lsim$ (negation), $\bigwedge$ (conjunction), $\bigvee$ (disjunction) and $\rightarrow$ (implication). Formulas are constructed as expected:

\begin{center}
$\phi ::= p \ | \ \bot \ | \ \lsim \phi \ | \  \bigwedge_{i \in I} \phi_i \ |  \ \bigvee_{i \in I} \phi_i \ |\ \phi \rightarrow \psi \ $,
\end{center}
where $p \in \mathtt{PROP}$ and $|I| \ \textless  \ \kappa$.  The infinitary relevant language with absurdity  $L_{\infty \omega}^{\rightarrow}$ comes from letting the index $I$ of a disjunction or a conjunction take any cardinality whatsoever.  $L_{\omega \omega}^{\rightarrow}$ is just an ordinary finitary relevant language.

A comment on the presence of $\bot$ in our languages is in place here, given that $\bot$ is not standardly part of the languages of relevant logic (cf. \cite{an}). The results in these pages cannot dispense with $\bot$, since languages without $\bot$ have no reasonable model-theoretic characterization. The interested reader is advised to consult \S 4 in \cite{ba}. 

 Note that  implications  are still finitary in the sense that we can only build formulas of the form

\begin{center}
$\phi_0 \rightarrow (\phi_1 \rightarrow (\phi_2 \rightarrow ( \dots \rightarrow \phi_{\lambda})\dots)$
\end{center}
when $\lambda$ is finite. This is the reason for writing $\omega$ in $L_{\kappa \omega}^{\rightarrow}$, it basically bounds the possible number of iterations of a $\rightarrow$ symbol in a formula. This notation should not be confused with the classical notation where the second subscript  is used to bound the possible length of a string of quantifiers.\footnote{It is opaque whether there is a connection here. For instance, $\underbrace{ \phi \rightarrow (\phi \rightarrow (\dots (\phi}_{\omega-\phi s} \rightarrow \psi) \dots))$ could be na\"ively translated $-$without the intervention of infinitely long strings of quantifiers$-$ into a ``classical infinitary" language with the appropriate signature, using the translation function given below, as $\forall y_0z_0 (Rxy_0z_0 \wedge T_x(\phi)^{y_0/x} \supset \forall y_1, z_1 (Rz_0y_1z_1 \wedge T_x(\phi)^{y_1/x} \supset (\dots \forall vu(Rz_{\omega}vu \wedge T_x(\phi)^{v/x} \supset T_x(\psi)^{u/x}) \dots)))$. The problem is that this is not a formula of any classical infinitary language $L_{\kappa \lambda}$. The reason is that it violates the well-foundedness of the subformula relation (Lemma 1.3.3 from \cite{dick}). To see this note that the collection of formulas $\forall y_iz_i (Rz_{i-1}y_iz_i \wedge T_x(\phi)^{y_i/x} \supset \forall y_{i+1}, z_{i+1} (Rz_iy_{i+1}z_{i+1} \wedge T_x(\phi)^{y_{i+1}/x} \supset (\dots \forall vu(Rz_{\omega}vu \wedge T_x(\phi)^{v/x} \supset T_x(\psi)^{u/x}) \dots)))$ ($0<i<\omega$) has no minimal element according to the subformula relation.}

An example of a connective definable in $L_{\infty \omega}^{\rightarrow}$ (but not in $L_{\omega \omega}^{\rightarrow}$) is $\xrightarrow{\omega}$ (iterated entailment), which was introduced by Humberstone (see \cite{bradyu}, p. 36). The formula $\phi \xrightarrow{\omega} \psi$ means that for some natural number $n > 1$, 
\begin{center}
$\underbrace{ \phi \rightarrow (\phi \rightarrow (\dots (\phi}_{n-\phi s} \rightarrow \psi) \dots))$
\end{center}
  holds. This, of course, boils down to an infinitary disjunction of finitary implications:
\begin{center}
$\bigvee_{n>1}\underbrace{ \phi \rightarrow (\phi \rightarrow (\dots (\phi}_{n-\phi s} \rightarrow \psi) \dots))$.
\end{center}

As we announced in $\S$\ref{sec:int}, we will be working in the Routley-Meyer semantic framework. In this setting, a  \emph{model} for $L_{\kappa \omega}^{\rightarrow}$ will be a structure $M = \langle W, R, *, T,  V\rangle$, where $W$ is a non-empty set, $T \in W$, $*$ is an  operation $*: W \longrightarrow W$ (the so called Routley star), $R \subseteq W \times W \times W$ and $V$ is a valuation function $V : \mathtt{PROP}  \longrightarrow \wp(W)$. In what follows we frequently omit $T$ from the presentation of our models since nothing essential hinges on that (given that we will not be considering any connectives involving $T$ in its semantics) and the reader can easily fill in the omitted details.

We define satisfaction at $w$ in $M$ recursively as follows:

\begin{center}
\begin{tabular}{llll}
$M, w \Vdash \bot$ & never&                                 \\
$M, w \Vdash p$ & iff &  $w \in V( p)$                                 \\
                                                           $M, w \Vdash (\lsim \phi)$           & iff &   $M, w^{*} \nVdash \phi$ \\
                                                             $M, w \Vdash  (\bigwedge_{i \in I} \phi_i)$        &   iff    &      $M, w \Vdash  \phi_i$  for every $i \in I$.    \\
 $M, w \Vdash  (\bigvee_{i \in I} \phi_i)$        &   iff    &      $M, w \Vdash  \phi_i$  for some $i \in I$.    \\
$M, w \Vdash \phi \rightarrow \psi$           & iff &   for every $a, b$ such that $Rwab$, \\                                                                                                                                 
& &    if $M, a \Vdash \phi$ then $M, b \Vdash \psi$. \\                                                                                                                                 
\end{tabular}
 \end{center}
Note that as $\bot$ gives us a means to define the empty class of models, $\top =_{df} (\lsim \bot)$ allows  defining the class of all models since it is invariably true (for recall that  $\bot$  invariably fails at $w^*$ for any $w$). 

The basic semantic units in relevant logic are (as in modal logic) \emph{pointed models}, that is, pairs $(M, w)$ where $w$ is some distinguished element of $W$. This is simply due to the fact that formulas are evaluated locally, at worlds.

 By considering restricted classes of Routley-Meyer structures where the relation $R$ has certain properties and only some valuations are admitted, we can get classes of models corresponding to a number of formal systems of relevant logic like ${\bf B}, {\bf T}$ or ${\bf R}$. Next we will consider some famous examples from \cite{r}.

Consider a relevant language  with absurdity $L$. A structure $ \langle W, R, *,  T, V\rangle$ is called a \emph{${\bf B}$-model} if for any $x, y, z, v \in W$:
\begin{itemize}
\item[(i)] $RTxx$  
\item[(ii)] $RTxv$ and $Rvyz$ implies that $Rxyz$
\item[(iii)]$ x = x^{**}$
\item[(iv)] $RTxy$ only if $RTy^* x^*$.
\item[(v)] $x \in V(p)$ and $RTxy$ implies that $y \in V(p)$. 

\end{itemize}
An \emph{${\bf R}$-model} is  a  ${\bf B}$-model where condition (iv) is strengthened to
\begin{itemize}
\item [(iv)$^{\prime}$]$Rzxy$ only if $Rzy^*  x^*$,

\end{itemize}
and, furthermore (abbreviating the claim that there is a $u$ such that $Rxyu$ and $Ruzv$ as $R^2(xy)zv$, and the claim that  there is an $u$ such that $Rxuv$ and $Ryzu$ as  $R^2x(yz)v$), for any $x, y, z, v \in W$:
\begin{itemize}
\item[(v)] $R^2(xy)zv$ only if $R^2x(yz)v$ 
\item[(vi)] $Rxxx$
\item[(vii)] $Rxyz$ only if $Ryxz$.
\end{itemize}
 An  \emph{${\bf RM}$-model}  is an ${\bf R}$-model such that 
and for any $x, y, z \in W$:
\begin{itemize}
\item[(v)] $Rxyz$ only if either $RTxz$ or $RTyz$.
\end{itemize}

When $\vdash$ is the deducibility relation of some formal system {\bf S} of relevant logic, a syntactic claim of the form $\phi \vdash \psi$ is to be interpreted on the class of corresponding models $V_{\bf S}$ as saying that  $M, T \Vdash \phi$ only if $M, T \Vdash \psi$ for every model $M \in V_{\bf S}$. In what follows we will use the symbol $V_{\bf S}$ as a variable for the class of models corresponding to any system ${\bf S}$ described in \cite{r} between ${\bf B}$ and ${\bf RM}$.\footnote{A caveat is in place here. The \emph{variable sharing property} is a folklore requirement from any formal system of relevant logic. The property states that whenever $\phi \rightarrow \psi$ is a theorem then $\phi$ and $\psi$ must share some propositional variable in common. When our language has $\bot$, the principle fails quite easily since $\bot \rightarrow \theta$ (for arbitrary $\theta$) would be a theorem, tempting one to claim that no system involving $\bot$ should qualify as a system of relevant logic. However, Yang \cite{ya} has suggested recently the \emph{strong implicit relevance property} as a nice substitute of the variable sharing property that would allow for systems containing $\bot$.}

 Next we give an example of the increased expressive power of infinitary relevant languages. Suppose $\Phi$ and $\Psi$ are  sets of formulas.  We speak of the pair $(\Phi, \Psi)$ as being \emph{satisfiable} or \emph{having a model} in a class $K$  of pointed  models if there is a model $(M, w) \in K$ such that $M, w \Vdash \phi$ for each $\phi \in \Phi$ and  $M, w \nVdash \psi$ for every $\psi \in \Psi$. These pairs are called \emph{tableaux} in \cite{chagrov} (pp.37-38).\footnote{See also the bi-theories in \cite{restall2}.} Let $V$ be a class of pointed models. A class of pointed  models $K \subseteq V$ is said to be  \emph{axiomatizable} in $L_{\omega \omega}^{\rightarrow}$ with respect to $V$  if there is a set of formulas $\Gamma$ of $L_{\omega \omega}^{\rightarrow}$ such that $K = \mbox{\emph{Mod}}(\Gamma)$ $-$where  $\mbox{\emph{Mod}}(\Gamma)$ the class of pointed model satisfying $\Gamma$.  Let $(M, w)$ be a model for $L_{\omega \omega}^{\rightarrow}$.  We say that $(M, w)$ is inconsistent if for some $p \in  \mathtt{PROP} $, $M, w \Vdash (p \wedge (\lsim p ))$.

Inconsistency is definable by a sentence of a propositional relevant language with absurdity $L_{\omega \omega}^{\rightarrow}$ if  $\mathtt{PROP} $ is finite, for in this case $\bigvee_{p \in \mathtt{PROP}} (p \wedge (\lsim p ))$ expresses that a model is inconsistent. If the signature is not finite, inconsistency is not in general a property axiomatizable  in $L_{\omega \omega}^{\rightarrow}$. This has been pointed out for $LP$ essentially in \cite{ferguson} with an argument using  a version of \L o\'s's theorem on ultraproducts.

\begin{Pro}\label{pro:inc} If  $|\mathtt{PROP}|  \geqslant \omega$, inconsistency is not a property of models axiomatizable in $L_{\omega \omega}^{\rightarrow}$ with respect to any $V_{\bf S}$. \end{Pro}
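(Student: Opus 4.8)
The plan is to argue by contradiction via an ultraproduct construction, exploiting the fact that the finitary relevant language $L_{\omega\omega}^{\rightarrow}$ embeds, through the standard translation $T_x$ already used above, into ordinary first-order logic over the signature consisting of the ternary $R$, the unary function $*$, a constant for $T$, and a unary predicate $P_m$ for each propositional variable $p_m$. Along this embedding $M, w \Vdash \phi$ becomes $M \models T_x(\phi)[w]$, so satisfaction of a \emph{finitary} relevant formula is preserved under ultraproducts of Routley--Meyer models by the classical \L o\'s theorem; this is the ``version of \L o\'s's theorem'' alluded to in the text. Suppose, for contradiction, that for some system $\bf S$ there were a set $\Gamma$ of $L_{\omega\omega}^{\rightarrow}$-formulas such that, within $V_{\bf S}$, the pointed models satisfying $\Gamma$ are exactly the inconsistent ones.

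First I would fix a single $\bf RM$-frame on which inconsistency is witnessed by one variable, and then vary only the valuation. Concretely, take $W = \{a,b\}$ with $a^* = b$, $b^* = a$, $T = a$, and let $R$ be every triple of $W^3$ except $Raab$; a direct check shows this frame satisfies (i)--(vii) together with the $\bf RM$-condition, and that $U = \{a\}$ is closed upward under $\{(x,y) : RTxy\}$ while $a \in U$ and $a^* = b \notin U$. Since $|\mathtt{PROP}| \geq \omega$, fix an enumeration $p_0, p_1, \dots$ of countably many variables, and for each $n$ put $M_n = \langle W, R, *, T, V_n\rangle$ with $V_n(p_n) = U$ and $V_n(p_m) = \emptyset$ for $m \neq n$. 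As $U$ and $\emptyset$ are both upward closed, each $V_n$ satisfies the hereditary condition (v), so $M_n \in V_{\bf RM} \subseteq V_{\bf S}$; and $M_n, a \Vdash p_n \wedge (\lsim p_n)$ because $a \in U$ while $b \notin U$, so $(M_n,a)$ is inconsistent and therefore satisfies $\Gamma$.

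Next I would take a non-principal ultrafilter $\mathcal{U}$ on $\omega$ and form $(N,[a]) = \prod_n (M_n, a)/\mathcal{U}$, treating the common point $a$ as interpreting a fresh constant. Because (i)--(vii), the $\bf RM$-condition, and heredity for each $p_m$ are first-order sentences true in every factor, \L o\'s's theorem keeps $(N,[a])$ in $V_{\bf S}$; likewise each $\gamma \in \Gamma$ holds at $a$ in every $M_n$, hence $N,[a] \Vdash \gamma$, so $(N,[a]) \Vdash \Gamma$. On the other hand, for each fixed $m$ the set $\{n : a \in V_n(p_m)\}$ is the singleton $\{m\}$, which is not in $\mathcal{U}$; so by \L o\'s $[a] \notin P_m^{N}$ for every $m$, no variable is true at $[a]$, and $(N,[a])$ is consistent. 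Thus $(N,[a])$ is a consistent member of $V_{\bf S}$ satisfying $\Gamma$, contradicting the choice of $\Gamma$.

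The only genuinely delicate ingredient is the \L o\'s-style preservation lemma for the relevant semantics; once the standard translation is in hand this collapses to the classical theorem, and the substantive idea is the design of the $M_n$: making each model inconsistent through a \emph{distinct} variable, so that non-principality of $\mathcal{U}$ wipes out every candidate witness simultaneously. I expect the remaining effort to be routine, and I would stress that building the frame at the level of $\bf RM$ (the strongest system in range, so that $V_{\bf RM} \subseteq V_{\bf S}$) is exactly what allows one construction to cover every $V_{\bf S}$ at once.
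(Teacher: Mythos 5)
Your proof is correct, and at its core it is the paper's argument with its one black box opened up. You even reconstructed the paper's countermodel exactly: with $a=t$ and $b=s$, your frame $W=\{a,b\}$, $*$ swapping the two points, $R=W^{3}\setminus\{(a,a,b)\}$ is literally the seven-triple $R$ used in the paper, and anchoring the construction in $V_{\bf RM}$ so that membership in every $V_{\bf S}$ comes for free is also precisely the paper's move. The genuine difference is in how the limit model is produced. The paper shows that the pair $(\Theta,\Phi)$ with $\Phi=\{p\wedge(\lsim p):p\in\mathtt{PROP}\}$ is finitely satisfiable in $V_{\bf S}$ --- at each finite stage it sets $V(p_i)=W$ for the finitely many excluded variables, letting every \emph{other} variable witness inconsistency --- and then cites Proposition 2.5 of \cite{ba}, a compactness theorem for such pairs over $V_{\bf S}$, to obtain one model satisfying $\Theta$ while falsifying all of $\Phi$. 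You instead inline the limit step: one factor per variable, each inconsistent through a distinct $p_n$, a non-principal ultrafilter on $\omega$, and the classical {\L}o\'s theorem transported through the standard translation $T_x$ (Proposition \ref{pro:tr}), which keeps $\Gamma$, the first-order frame conditions, and heredity true at $[a]$ while non-principality erases every atomic witness at once. What your route buys is self-containedness --- no appeal to the pair-compactness result of \cite{ba} --- and it makes explicit the ``version of {\L}o\'s's theorem'' the paper only gestures at via \cite{ferguson}; what the paper's route buys is brevity plus a reusable lemma that uniformly controls which formulas are made false, not just the atomic ones. Two small points worth keeping explicit in a final write-up: since $|\mathtt{PROP}|$ may exceed $\omega$, every unenumerated variable must receive the empty valuation in each factor so that it too fails at $[a]$ (you implicitly do this), and the consistency of the ultraproduct point needs only that no atom holds at $[a]$, since $p\wedge(\lsim p)$ already requires $p$ there; with those noted, all your frame verifications check out.
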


\begin{proof} Suppose it were.  Say  the theory $\Theta$ axiomatizes the class of inconsistent models. Now, the pair $(\Theta, \Phi)$ where $\Phi = \{ (p \wedge (\lsim p )) : p \in \mathtt{PROP}\}$ is finitely satisfiable in $V_{\bf S}$. To see this take a finite subset $\{p_0, \dots , p_n\} \subset  \mathtt{PROP}$. Consider the model $\langle W, R, *, V, T\rangle$ (in $ V_{\bf S}$ since it is in  $ V_{\bf RM}$) such that 

\begin{center}
\begin{itemize}
\item[] $W=\{t, s\}$ 
\item[] $* = \{\langle t, s \rangle, \langle s, t \rangle\}$
\item[] $R = \{\langle t,t,t\rangle, \langle t,s,t\rangle, \langle t, s, s\rangle, \langle s,t,t\rangle, \langle s,t,s\rangle, \langle s, s,t\rangle, \langle s, s, s\rangle\}$
\item[] $T=t$
\item[] $V(p_i) = W $  (for $i = 0, \dots, n$)
\item[] $V(q) = \{t\}$ (for $q \in \mathtt{PROP}$, and $q \neq p_i$ for $i = 0, \dots, n$)
\end{itemize}
\end{center}
We see that if  $q \in \mathtt{PROP}$ but $q \neq p_i$ for $i = 0, \dots, n$, then $M, t \Vdash (q \wedge (\lsim q )) $. On the other hand, $M, t \nVdash (p_i \wedge (\lsim p_i ))$ $(i = 0, \dots, n) $ since $t^* = s \in V(p_i)$, which means that $M, t \nVdash (\lsim p_i)$.  

Finally, by Proposition 2.5 of \cite{ba}, the pair $(\Theta, \Phi)$ is satisfiable in $V_{\bf S}$, which is a contradiction since by definition $\Theta$ says that at least one of $\phi \in \Phi$ must hold.
\end{proof}

When $|\mathtt{PROP}|  \geqslant \omega$, inconsistency is expressible by a single formula in the extension   $L_{|\mathtt{PROP}|^+ \omega}^{\rightarrow}$ of $L_{\omega \omega}^{\rightarrow}$. Again, $\bigvee_{p \in \mathtt{PROP}} (p \wedge (\lsim p ))$ expresses that  a model is inconsistent. This fact shows that $L_{|\mathtt{PROP}|^+ \omega}^{\rightarrow}$ is a proper expressive extension of $L_{\omega \omega}^{\rightarrow}$.

Consider an infinitary language with equality and boolean negation admitting conjunctions and disjunctions of size at most $\kappa$ (the standard reference for the study of such laguages is  \cite{dick}) and quantifications over at most finitely many variables that comes with an individual constant symbol $T$, one function symbol $*$, a distinguished three place relation symbol $R$,  and a unary predicate $P$ for each $p \in \mathtt{PROP}$. Following the tradition in modal logic, we might call this a \emph{ correspondence language} $L^{corr}_{\kappa \omega}$ for   $L_{\kappa \omega}^{\rightarrow}$ (cf. \cite{blackburn}). Now we can read a model $M$ as a classical model for $L^{corr}_{\kappa \omega}$ in a straightforward way:   $W$ is taken as the domain of the structure, the constant $T$ denotes the obvious distinguished world, $V$ specifies the denotation of each of the predicates $P, Q,  \dots$, while $*$ is the denotation of the function symbol $*$ of $L^{corr}$, and $R$ the denotation of the relation $R$ of $L^{corr}_{\kappa \omega}$.

Where $t$ is a term in the correspondence language, we write $\phi^{t/x}$ for the result of replacing $x$ with $t$ everywhere in the formula $\phi$. As expected, it is easy to specify a translation from the formulas of the basic relevant language with absurdity to the correspondence language as follows:

\begin{center}
\begin{tabular}{lll}
\,\,\,\,\,\,\,\,\,\, \,\,\, \,\,\, \,\,\, $T_x(\bot)  $ & = &  $ \neg Rxxx \wedge Rxxx$                                 \\
\,\,\,\,\,\,\,\,\,\, \,\,\, \,\,\, \,\,\, $T_x(p)  $ & = &  $ Px$                                 \\
\,\,\,\,\,\,\,\,\,\, \,\,\, \,\,\,$T_x(\lsim \phi)$ & =  &  $ \neg T_x(\phi)^{x^{*}/x}$     \\
\,\,\,\,\,\,\,\,\,\, \, $T_x(\bigwedge_{i \in I}\phi_i) $           & =  &  $\bigwedge_{i \in I}  T_x(\phi_i)$
 \\  
\,\,\,\,\,\,\,\,\,\, \,  $T_x(\bigvee_{i \in I}\phi_i) $           & =  &  $\bigvee_{i \in I}  T_x(\phi_i)$
 \\                                                                                                                                  
                                   \,\,\,\,\,\,\,\,\,\, \  $T_x(\phi \rightarrow \psi)  $                            &=  &  $\forall y, z (Rxyz \wedge T_x(\phi)^{y/x} \supset T_x(\psi)^{z/x})$.  \\

\end{tabular}
 \end{center}
The symbols $\neg$ and $\supset$ appear here representing  boolean negation and material implication in quantificational infinitary logic (which should not be confused  with the relevant $\lsim$ and $\rightarrow$).

The following proposition gives a bridge between the satisfaction relation $\Vdash$ for relevant propositional languages we just defined and the standard satisfaction relation $\vDash$ from classical logic (where when $\phi$ is a classical formula, we write $M \vDash \phi [w]$ to mean that the object $w$ satisfies $\phi$ in the usual Tarskian sense).

\begin{Pro} \label{pro:tr}  For any $w$, $M, w \Vdash \phi$ if and only if  $M \vDash T_x(\phi) [w]$. \end{Pro}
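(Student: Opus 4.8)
The plan is to argue by induction on the structure of $\phi$, proving the biconditional simultaneously for every world $w$; quantifying over all $w$ is essential, since the negation and implication clauses force us to invoke the inductive hypothesis at worlds other than $w$, namely $w^{*}$ and the $R$-successors of $w$. Because the conjunction and disjunction clauses admit index sets $I$ of arbitrary cardinality, the induction is not on natural-number complexity but on the well-founded subformula relation; as recalled in the footnote above (Lemma 1.3.3 of \cite{dick}), that relation is well-founded on the formulas of $L_{\kappa\omega}^{\rightarrow}$, so transfinite induction along it is legitimate.

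First I would isolate the one auxiliary fact that does the real work, a substitution lemma for the correspondence language: for every formula $\theta$ whose only free variable is $x$ and every term $t$, $M \vDash \theta^{t/x}[w]$ iff $M \vDash \theta[t^{M}[w]]$, where $t^{M}[w]$ denotes the value of $t$ under the assignment sending $x$ to $w$. Two instances are needed: $t = x^{*}$, giving $t^{M}[w]=w^{*}$, for the negation clause, and the case where $t$ is a variable bound by the outer quantifier block in the implication clause. Here one must read $\cdot^{t/x}$ as capture-avoiding substitution (renaming bound variables, equivalently choosing the witnessing variables fresh at each implication step), since the translation reuses $y,z$ at every level of nesting; with that reading the substitution lemma is the standard one, and a side induction confirms that $T_x(\phi)$ indeed has no free variable other than $x$.

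With the substitution lemma in hand the cases are routine. The base cases are immediate: $T_x(\bot)=\neg Rxxx \wedge Rxxx$ is never satisfied, matching ``$M,w\Vdash\bot$ never,'' and $T_x(p)=Px$ holds at $w$ exactly when $w\in V(p)$. Conjunction and disjunction follow because $\bigwedge$ and $\bigvee$ on the relevant side translate to $\bigwedge$ and $\bigvee$ on the classical side, so the biconditional for $\bigwedge_{i\in I}\phi_i$ (resp. $\bigvee_{i\in I}\phi_i$) reduces termwise to the inductive hypotheses for the $\phi_i$. For negation, $M\vDash T_x(\lsim\phi)[w]$ iff $M\not\vDash T_x(\phi)^{x^{*}/x}[w]$, which by the substitution lemma equals $M\not\vDash T_x(\phi)[w^{*}]$, which by the inductive hypothesis at $w^{*}$ equals $M,w^{*}\nVdash\phi$, i.e. $M,w\Vdash\lsim\phi$.

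The implication clause is where I expect the only real friction, since it combines the quantifier semantics, the relation $R$, and two substitutions at once. Unwinding $T_x(\phi\rightarrow\psi)=\forall y,z(Rxyz\wedge T_x(\phi)^{y/x}\supset T_x(\psi)^{z/x})$ at $w$, the classical truth condition reads: for all $a,b$, if $Rwab$ and $M\vDash T_x(\phi)^{y/x}$ under $y\mapsto a$, then $M\vDash T_x(\psi)^{z/x}$ under $z\mapsto b$. Applying the substitution lemma turns these into $M\vDash T_x(\phi)[a]$ and $M\vDash T_x(\psi)[b]$, and the inductive hypotheses at $a$ and $b$ turn them into $M,a\Vdash\phi$ and $M,b\Vdash\psi$; the resulting statement ``for all $a,b$ with $Rwab$, $M,a\Vdash\phi$ implies $M,b\Vdash\psi$'' is exactly the relevant clause for $M,w\Vdash\phi\rightarrow\psi$. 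The sole delicate point is the bookkeeping that lets the substitution lemma apply cleanly, namely ensuring that no variable of $T_x(\phi)$ or $T_x(\psi)$ is inadvertently captured when we substitute $y$ and $z$; handling this through capture-avoiding substitution (or fresh variables) closes the argument.
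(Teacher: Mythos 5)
Your proposal is correct and takes essentially the same approach as the paper: the paper's proof is a one-sentence remark that each relevant formula ``says the same about $w$'' as its translation, which is precisely the structural induction you carry out in detail. Your explicit treatment of the substitution lemma, the capture-avoiding reading of $\cdot^{t/x}$, and induction along the well-founded subformula relation (needed for infinitary conjunctions and disjunctions) simply supplies the bookkeeping the paper leaves implicit.
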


\begin{proof} We simply need to note that, according to the Routley-Meyer semantics, each propositional  relevant formula $\phi$ says the same about $w$ as $T_x(\phi)$ does in the Tarskian semantics. 

 \end{proof}

The existence of a satisfaction preserving translation function allows us to study relevant languages with absurdity as fragments of model-theoretically better  understood creatures.

\section{ Failure of compactness and strong completeness}\label{sec:inc}

In this section, we study briefly a phenomenon pervasive in infinitary logic even at the propositional level: the loss of compactness. This quickly leads to a loss of strong completeness for any reasonable infinitary formal system (cf. \cite{karp}). Such seems to be the price to pay for having infinitely long conjunctions and disjunctions around. Here we will focus our attention on specific classes of models since we will be discussing questions sensitive to the choice of formal system such as incompleteness. 

\begin{Def} Let $L_{\kappa \omega}^{\rightarrow}$ be a relevant language with absurdity, $K$ a class of Routley-Meyer structures for it and $(\Phi, \Psi)$ a pair of collections of relevant formulas. $L_{\kappa \omega}^{\rightarrow}$  is said to be \emph{$\lambda$-compact with respect to K} if  for every  $\Phi_0 \subseteq \Phi$ and $\Psi_0 \subseteq \Psi$ such that $|\Phi_0|, |\Psi_0| \ \textless \ \lambda$, the pair $(\Phi_0, \Psi_0)$  has a model in $K$ only if $(\Phi, \Psi)$  has a model in $K$. \end{Def}

\begin{Pro} \label{pro:in}  Let $|\mathtt{PROP}|  \geqslant \kappa$.  $L_{\kappa \omega}^{\rightarrow}$  is $\kappa$-compact with respect to some $V_{\bf S}$  only if $\kappa$ is a regular limit cardinal. \end{Pro}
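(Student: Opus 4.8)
The plan is to prove the contrapositive: if $\kappa$ is not a regular limit cardinal, then $L_{\kappa\omega}^{\rightarrow}$ fails to be $\kappa$-compact with respect to every $V_{\bf S}$. Since $\kappa$ is an infinite cardinal, failing to be a regular limit cardinal means $\kappa$ is either singular or a successor cardinal. In each case I would construct an explicit pair $(\Phi,\Psi)$ together with propositional variables witnessing the failure: every small subpair $(\Phi_0,\Psi_0)$ with $|\Phi_0|,|\Psi_0| < \kappa$ has a model in $V_{\bf S}$, while the full pair $(\Phi,\Psi)$ has none. The hypothesis $|\mathtt{PROP}| \geq \kappa$ guarantees we have enough propositional variables $\{p_i : i < \kappa\}$ to index the construction. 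I would aim to build the counterexample so that it can be realized already inside $V_{\bf RM}$ (the most restrictive class), hence inside every $V_{\bf S}$, exactly as is done in the proof of Proposition~\ref{pro:inc}.

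For the successor case, write $\kappa = \mu^+$ with $\mu < \kappa$. The idea is to mimic a standard failure of $\kappa$-compactness: force the existence of $\mu^+$ many ``distinct'' propositional behaviours while each formula individually only constrains $\mu$ of them. Concretely, I would let $\Phi$ assert (via a $\kappa$-sized conjunction, permissible since $|I| < \kappa$ is violated only at size $\kappa$, so I must instead use $\Psi$ or a disjunction) that all of the $p_i$ behave incompatibly, and let $\Psi$ collect the small refutations. The point is that any subpair of size $< \kappa = \mu^+$ mentions at most $\mu$ of the variables, and a two-world $V_{\bf RM}$-model of the kind used in Proposition~\ref{pro:inc} (setting $V(p_i) = W$ or $V(p_i) = \{t\}$ selectively) satisfies it; but the whole pair demands something of all $\kappa$ variables simultaneously that no model can meet.

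For the singular case, write $\kappa = \sup_{j < \mathrm{cf}(\kappa)} \kappa_j$ with each $\kappa_j < \kappa$ and $\mathrm{cf}(\kappa) < \kappa$. Here the failure exploits the cofinality directly: I would arrange a $\Phi$ (or $\Psi$) whose only obstruction to having a model is a single $\kappa$-indexed disjunction or conjunction that decomposes along the cofinal sequence, so that each approximation of size $< \kappa$ lives below some $\kappa_j$ and is therefore satisfiable, while satisfying the whole requires a genuinely $\kappa$-sized object. Again I would realize each approximation by a finite Routley-Meyer model in $V_{\bf RM}$, tuning the valuations $V(p_i)$ as in Proposition~\ref{pro:inc}.

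The main obstacle I anticipate is the bookkeeping forced by the \emph{two-sided} nature of compactness here: satisfiability of a tableau $(\Phi,\Psi)$ requires simultaneously making every $\phi \in \Phi$ hold and every $\psi \in \Psi$ fail at a single point, so I must be careful that the formulas I place in $\Phi$ versus $\Psi$ respect the cardinality bound $|I| < \kappa$ on admissible conjunctions and disjunctions, and that the ``bad'' global constraint is itself expressible within $L_{\kappa\omega}^{\rightarrow}$. In particular the combinatorial heart is choosing the right global formula whose refutations/affirmations stratify cofinally (singular case) or along a successor step (successor case); verifying the small-subpair satisfiability reduces to the finite two-world construction already validated in Proposition~\ref{pro:inc}, and invoking Proposition~2.5 of \cite{ba} to lift finite satisfiability where needed, so I expect that part to be routine.
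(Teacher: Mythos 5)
Your skeleton (prove the contrapositive, split into successor and singular cases, realize the witnessing models inside $V_{\bf RM}$ so they live in every $V_{\bf S}$) matches the paper, but the combinatorial heart that you defer as ``routine'' is precisely what the proof consists of, and the two concrete things you do commit to are wrong. In the successor case $\kappa = \xi^+$ the paper needs no pair $(\Phi,\Psi)$ at all and no large conjunction: it takes a grid of variables $p_{\lambda\gamma}$ ($\lambda < \xi^+$, $\gamma < \xi$) and the single set $\Delta = \{\bigvee_{\gamma<\xi} p_{\lambda\gamma} : \lambda < \xi^+\} \cup \{p_{\lambda\gamma}\wedge p_{\mu\gamma}\rightarrow\bot : \lambda\neq\mu\}$. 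Incompatibility is expressed by $\rightarrow\bot$ (this answers your worry about how to state that the variables ``behave incompatibly'' in a negation-weak language, a worry your $\Phi$-versus-$\Psi$ hedging shows you had not resolved); unsatisfiability of $\Delta$ is a pigeonhole count ($\kappa$ rows cannot occupy pairwise disjoint nonempty sets of the $\xi$ columns); and satisfiability of each $\Delta_0$ with $|\Delta_0|\leq\xi$ comes from the axiom of choice, which gives an injection $f$ of the $\leq\xi$ mentioned row-indices into the $\xi$ columns, realized in a \emph{one}-world reflexive model with $V(p_{\lambda f(\lambda)}) = W$ and all implications holding by antecedent failure. Your plan to ``reduce to the finite two-world construction of Proposition~\ref{pro:inc} and invoke Proposition~2.5 of \cite{ba} to lift finite satisfiability'' is a genuine error, not a detail: the subsets to be satisfied have every infinite size $<\kappa$, so finite satisfiability is the wrong notion, and Proposition~2.5 of \cite{ba} is a compactness theorem for the \emph{finitary} language $L^{\rightarrow}_{\omega\omega}$ --- it does not apply to sets of infinitary formulas, and appealing to a compactness principle inside a proof that compactness fails for exactly these formula sets would be unsound even if it did.

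The singular case contains a second false step. You claim that ``each approximation of size $<\kappa$ lives below some $\kappa_j$,'' but that is exactly what fails at singular cardinals: a cofinal set of indices of size $\mathrm{cf}(\kappa) < \kappa$ is an approximation of size $<\kappa$ bounded by no $\kappa_j$, so your stratification argument for small-subpair satisfiability collapses (and, relatedly, naive direct constructions along the cofinal sequence tend either to have unsatisfiable small subpairs already or a satisfiable total set). The paper takes a different route here: it cites Dickmann's observation (\cite{dick}, p.~85) that for singular $\kappa$ the language $L_{\kappa\omega}$ is exactly as expressive as $L_{\kappa^+\omega}$, because size-$\kappa$ disjunctions decompose into $\mathrm{cf}(\kappa)$-many nested disjunctions each of size $<\kappa$; this lets it transport the successor-style grid counterexample into $L^{\rightarrow}_{\kappa\omega}$ and rerun the same pigeonhole-plus-injection argument, rather than building anything cofinality-specific. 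Without the grid construction, without a sound substitute for the misapplied compactness appeal, and with the boundedness claim false, the proposal is a plausible plan but not a proof.
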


\begin{proof}  Suppose $\kappa$ is a sucessor cardinal $\xi +1$. Without loss of generality, assume $\mathtt{PROP}$ is composed of double indexed propositional variables $p_{\lambda \gamma}$ ($\lambda \ \textless \ \xi+1, \gamma \ \textless \ \xi$). Consider the set of formulas
\begin{center}
$\Delta = \{\bigvee_{\gamma \ \textless \ \xi} p_{\lambda \gamma }: \lambda \ \textless \ \xi+1\} \cup \{p_{ \lambda \gamma } \wedge p_{ \mu \gamma} \rightarrow \bot :\mu \neq\lambda, \mu, \lambda \ \textless \ \xi+1, \gamma \ \textless \ \xi \}$.
\end{center}
Take any $\Delta_0 \subset \Delta$ such that $|\Delta_0| \leq \xi$. By the axiom of choice, there is a one-to-one mapping $f$ from the set of all $\lambda$ such that $p_{\lambda \gamma}$ for some $\gamma$ appears in a formula of $\Delta_0$ into $\xi$.  We build the  model where $W=\{t\}$, $R =\{\langle t, t, t \rangle\}, *=\{\langle t, t \rangle\}$, and we define $V$ as follows: $V(p_{\lambda f(\lambda)}) = W$, and $V(p_{\lambda \gamma}) = \emptyset$ if $\gamma \neq f(\lambda)$. It is clear that $M, t \Vdash \bigvee_{\gamma \ \textless \ \xi} p_{\lambda \gamma }$ for all disjunctions in $\Delta_0$ with $\gamma \ \textless \ \xi+1$. Now take any $p_{ \lambda \gamma } \wedge p_{ \mu \gamma} \rightarrow \bot \in \Delta_0$ such that $ \mu \neq\lambda, \mu, \lambda \ \textless \ \xi+1,$ and $\gamma \ \textless \ \xi$. Since $f$ is an injection we have that $f(\mu) \neq f(\lambda)$, so $p_{ \lambda \gamma }$ and $p_{ \mu \gamma}$ will never hold simultaneously at any world in $W$ by our definition of $V$. Hence, $M, t \Vdash p_{ \lambda \gamma } \wedge p_{ \mu \gamma} \rightarrow \bot$ by antecedent failure. However, $\Delta$ itself has no model, contradicting $\kappa$-compactness.

Suppose on the other hand that $\kappa$ is singular. In \cite{dick} (p. 85) it is noted that the infinitary languages $L_{\kappa \omega}$ where $\kappa$ is singular are exactly as expressive as  languages $L_{\kappa^+ \omega}$. The argument holds  for $L_{\kappa \omega}^{\rightarrow}$ as well.  Hence, without loss of generality, we can take 
\begin{center}
$\Delta = \{\bigvee_{\gamma \ \textless \ \kappa} p_{\lambda \gamma }: \lambda \ \leq \ \kappa\} \cup \{p_{ \lambda \gamma } \wedge p_{ \mu \gamma} \rightarrow \bot :\mu \neq\lambda, \mu, \lambda \ \leq \ \kappa, \gamma \ \textless \ \kappa \}$
\end{center}
to be a perfectly good collection of formulas of $L_{\kappa \omega}^{\rightarrow}$. As before every subset $\Delta_0 \subset \Delta$ such that $|\Delta_0| \ \textless \  \kappa$ has a model in $V_{\bf S}$ but $\Delta$ does not.
 \end{proof}

A Hilbert-style formal system $H$ for a language  $L_{\kappa \omega}^{\rightarrow}$ with respect to the class of   models  for a standard system for relevant logic will be formed by a set of formulas  of  $L_{\kappa \omega}^{\rightarrow}$ taken as the collection of axioms and a collection of rules of inference each with less than $\kappa$ premises. If $\Gamma$ is a collection of formulas of $L_{\kappa \omega}^{\rightarrow}$ and $\phi$ a formula of $L_{\kappa \omega}^{\rightarrow}$,  we will write $\Gamma \vdash_H \phi$ if there is a sequence of formulas $S$ of length less than $\kappa$ such that every formula in $S$ is either an axiom, one of the formulas in $\Gamma$ or it follows from  previous formulas in $S$ using one of the inference rules. 

\begin{Pro} Let $|\mathtt{PROP}|  \geqslant \kappa^+$. Let $H$ be a formal system for  $L_{\kappa^+ \omega}^{\rightarrow}$  sound with respect to some $V_{\bf S}$. Then $H$  is not strongly complete. \end{Pro}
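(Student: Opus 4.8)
The plan is to deduce the failure of strong completeness from the failure of $\kappa^+$-compactness already recorded in Proposition \ref{pro:in}. Since $\kappa^+$ is a successor cardinal, it is certainly not a regular limit cardinal, so the contrapositive of Proposition \ref{pro:in} (applied with $\kappa^+$ in the role of ``$\kappa$'') tells us that $L_{\kappa^+ \omega}^{\rightarrow}$ fails to be $\kappa^+$-compact with respect to \emph{any} $V_{\bf S}$. More usefully, I would reuse the explicit witness from the proof of that proposition: taking $\xi = \kappa$ there, one obtains a set $\Delta$ of formulas of $L_{\kappa^+ \omega}^{\rightarrow}$ — built from the doubly indexed variables $p_{\lambda\gamma}$ with $\lambda < \kappa^+$, $\gamma < \kappa$, of which there are exactly $\kappa^+$, so that the hypothesis $|\mathtt{PROP}| \geqslant \kappa^+$ is precisely what allows $\Delta$ to live in the language — such that $\Delta$ has no model in $V_{\bf S}$ while every subset $\Delta_0 \subseteq \Delta$ with $|\Delta_0| < \kappa^+$ does have a model in $V_{\bf S}$.

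Assume now, for contradiction, that $H$ is strongly complete with respect to $V_{\bf S}$, where strong completeness means that whenever $M, T \Vdash \gamma$ for all $\gamma \in \Gamma$ entails $M, T \Vdash \phi$ for every $M \in V_{\bf S}$, we have $\Gamma \vdash_H \phi$. Since $\Delta$ has no model, there is no $M \in V_{\bf S}$ with $M, T \Vdash \delta$ for all $\delta \in \Delta$; hence the implication ``$M, T \Vdash \delta$ for all $\delta \in \Delta$ only if $M, T \Vdash \bot$'' holds vacuously, i.e.\ $\bot$ is a semantic consequence of $\Delta$. Strong completeness then yields a derivation witnessing $\Delta \vdash_H \bot$.

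The key point is that such a derivation is a sequence $S$ of length $< \kappa^+$, so it contains at most $\kappa$ formulas in total; in particular only a subset $\Delta_0 \subseteq \Delta$ of the premises with $|\Delta_0| \leqslant \kappa < \kappa^+$ actually occurs in $S$, and the very same sequence witnesses $\Delta_0 \vdash_H \bot$ (here I use that each rule has $< \kappa^+$ premises, all of which appear earlier in $S$, so no further $\Delta$-formulas are needed). By soundness, $\bot$ is a semantic consequence of $\Delta_0$, which forces $\Delta_0$ to have no model in $V_{\bf S}$ — contradicting the fact that every $< \kappa^+$-sized subset of $\Delta$ is satisfiable in $V_{\bf S}$. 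I expect the only delicate points to be bookkeeping ones rather than conceptual: fixing the truth-preservation-at-$T$ reading of $\vdash$ and $\vDash$ so that soundness and strong completeness line up with the pointed-model satisfiability used in Proposition \ref{pro:in}, and the premise-counting step that bounds $|\Delta_0|$ by the length of the derivation. Once these are in place the argument is just the standard ``compactness from strong completeness'' packaging specialized to the cardinal bounds $H$ satisfies.
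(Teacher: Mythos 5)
Your proposal is correct and follows essentially the same route as the paper: it reuses the witness $\Delta$ from Proposition \ref{pro:in} (with $\xi=\kappa$), notes that $\Delta$ semantically implies $\bot$ while every subset of size $<\kappa^+$ is satisfiable in $V_{\bf S}$, and then uses soundness plus the $<\kappa^+$ bound on derivation length to rule out $\Delta \vdash_H \bot$. The only difference is presentational --- you run the argument as a reductio and spell out the premise-counting step (a derivation of length $<\kappa^+$ touches at most $\kappa$ members of $\Delta$) that the paper compresses into ``but that means that $\Delta \nvdash_H \bot$'', which is a welcome clarification rather than a new idea.
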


\begin{proof} Take $\Delta$ in the proof of Proposition \ref{pro:in}. Since every $\Delta_0 \subseteq \Delta$ with $|\Delta_0|\ \textless \ \kappa$ has a model in $V_{\bf S}$, by the soundness of $H$, we see that $\Delta_0 \nvdash_{H} \bot$, but that means that $\Delta \nvdash_{H} \bot$. However $\Delta$ semantically implies $\bot$ over $V_{\bf S}$, since it has no model. \end{proof}

\section{Relevant directed bisimulations and  Karp's Theorem}\label{sec:db}

In this section, we introduce relevant directed bisimulations, establish some basic facts that  will be needed in \S \ref{sec:pre} and prove the relevant analogue of Karp's theorem. The present section as well as \S \ref{sec:pre} focuses on the  infinitary relevant language with absurdity $L^{\rightarrow}_{\infty \omega}$.

\begin{Def} \label{def:dg} The \emph{ degree} of an infinitary relevant formula $\phi$, in symbols, $dg(\phi)$, is defined inductively in the following way:

\begin{center}
\begin{tabular}{llll}
\,\,\,\,\, \,\,\,\,\, \, \, \,\,\, $dg( \bot)$ & = &              \emph{0},                   \\
\,\,\,\,\, \,\,\,\,\, \, \, \,\,\, \ $dg( p)$ & = & \emph{0},                                \\
                                                       
                                                          \,\,\,\,\, \,\,\,\, \,    $dg(\bigwedge_{i \in I} \phi_i)$        &  =  &    $  \mbox{\emph{sup}}\{dg(\phi_i) : i \in I\}$, \\
\,\,\,\,\, \,\,\,\, \,  $dg(\bigvee_{i \in I} \phi_i)$        &=    &      $  \mbox{\emph{sup}}\{dg(\phi_i) : i \in I\}$,    \\
  \,\,\,\,\, \,\,\,\, \, \,\,\,\,  $dg( \lsim \phi)$           &= & $ dg(\phi)$,\\
\,\,\,\,\, \,\,\,\, \  $dg(\phi \rightarrow \psi)$           & = &  $ \mbox{\emph{sup}}\{dg(\phi), dg(\psi)\}+1$.\\

\end{tabular}
 \end{center}
  \end{Def}

We will say that two formulas $\phi$ and $\psi$ are \emph{equivalent} if for any model $(M, w)$, $M, w \Vdash \phi$ iff $M, w \Vdash \psi$.

\begin{Pro}\label{pro:equiv} For each ordinal $\alpha$, there are only set-many non-equivalent formulas of $L^{\rightarrow}_{\infty \omega}$ with degree $\leq \alpha$. \end{Pro}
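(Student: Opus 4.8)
The plan is to prove the statement by induction on the ordinal $\alpha$, counting (an upper bound on) the number of non-equivalent formulas of degree $\leq \alpha$. The key observation is that the only connective that raises degree is $\rightarrow$: conjunction, disjunction and negation all leave the degree unchanged (indeed $\bigwedge$ and $\bigvee$ take the supremum of the degrees of their components, and $\lsim$ preserves it). So a formula of degree $\leq \alpha$ is built, using arbitrary negations, conjunctions and disjunctions, out of ``atoms'' that are either genuine atoms ($\bot$, or a propositional variable $p$) or implications $\phi \rightarrow \psi$ where $dg(\phi), dg(\psi) < \alpha$. This suggests stratifying the argument: first bound the number of implications of degree $\leq \alpha$ using the inductive hypothesis, and then bound how many non-equivalent formulas one can form by closing a given set of ``building blocks'' under $\lsim$, $\bigwedge$ and $\bigvee$.

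First I would set up the induction. Let $F_\alpha$ denote a set of representatives, one from each equivalence class of formulas of degree $\leq \alpha$; the claim is that $F_\alpha$ may be taken to be a set (not a proper class), and I will in fact track a cardinal bound $\mu_\alpha = |F_\alpha|$. For the base case $\alpha = 0$, the formulas of degree $0$ are exactly those built from $\bot$ and the propositional variables using $\lsim$, $\bigwedge$ and $\bigvee$ only (no implications). Since $|\mathtt{PROP}|$ is some fixed cardinal, the collection of such formulas is a set, so there are at most set-many, hence at most set-many up to equivalence. For the successor/limit step, assume as inductive hypothesis that for every $\beta < \alpha$ there are only set-many non-equivalent formulas of degree $\leq \beta$.

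The core step is the bound on implications. Every implication $\phi \rightarrow \psi$ of degree $\leq \alpha$ has $dg(\phi), dg(\psi) < \alpha$, so by the inductive hypothesis $\phi$ is equivalent to some member of $\bigcup_{\beta < \alpha} F_\beta$ and likewise for $\psi$. Since $\phi \rightarrow \psi$ depends only on the equivalence classes of $\phi$ and $\psi$ (if $\phi \equiv \phi'$ and $\psi \equiv \psi'$ then $\phi \rightarrow \psi \equiv \phi' \rightarrow \psi'$, which is immediate from the satisfaction clause for $\rightarrow$), the number of non-equivalent implications of degree $\leq \alpha$ is at most $|\bigcup_{\beta < \alpha} F_\beta|^2$, which is a set-sized cardinal, say $\nu$. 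Now every formula of degree $\leq \alpha$ is obtained by closing the set $A$ consisting of $\bot$, the propositional variables, and one representative of each non-equivalent implication of degree $\leq \alpha$ (a set of size at most $|\mathtt{PROP}| + \nu + \aleph_0$) under $\lsim$, $\bigwedge$ and $\bigvee$. Up to equivalence, each such formula is determined by its Boolean/infinitary combination of members of $A$, and I would argue that there are only set-many of these.

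The main obstacle is precisely this last bound, since closing a set under \emph{arbitrary-cardinality} conjunctions and disjunctions naively produces a proper class of syntactic objects. The resolution is to observe that we only need count \emph{up to equivalence}: semantically, the negations, conjunctions and disjunctions of the atoms in $A$ correspond, via the translation $T_x$ of Proposition \ref{pro:tr}, to the Boolean closure of the subsets of pointed models defined by the elements of $A$, and a formula is determined up to equivalence by the class of pointed models it defines. Since each formula of $L^{\rightarrow}_{\infty\omega}$ distinguishes points only through finitely many propositional atoms and the finitely many implications it actually mentions, one reduces to distinguishing ``types'' over the finite set of relevant atoms occurring in the formula; there being only set-many finite subsets of the set $A$, and each giving rise to only set-many distinguishable types and hence set-many equivalence classes, the total count remains a set. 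Making this reduction rigorous — arguing that an infinitary Boolean combination over $A$ cannot pin down more than a set's worth of distinct truth conditions — is the delicate point, and I would handle it by noting that the equivalence class of $\bigwedge_{i\in I}\phi_i$ (resp.\ $\bigvee$) depends only on the \emph{set} of equivalence classes $\{[\phi_i] : i \in I\}$, so the number of non-equivalent conjunctions (resp.\ disjunctions) is bounded by the number of subsets of the already-counted set of non-equivalent ``sub-formulas,'' keeping every stage set-sized.
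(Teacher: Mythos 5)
Your inductive architecture (only $\rightarrow$ raises degree; equivalence is a congruence for $\rightarrow$, so the implications of degree $\leq \alpha$ contribute at most $|\bigcup_{\beta < \alpha} F_\beta|^2$ classes) is sound, and it is genuinely different from the paper's proof, which translates into $L^{corr}_{\infty\omega}$ via $T_x$ and simply cites Barwise's Corollary 10.9 (set-many formulas of bounded quantifier rank up to equivalence, for signatures with functions). But your proof has a real gap exactly at the point you flag as delicate: bounding, up to equivalence, the closure of a set $A$ of ``atoms'' under $\lsim$, $\bigwedge$, $\bigvee$. Your first reduction is false: a formula of $L^{\rightarrow}_{\infty\omega}$ need \emph{not} mention only finitely many propositional atoms or implications --- $\bigwedge_{p \in \mathtt{PROP}} p$ already mentions infinitely many --- so you cannot reduce to types over finite subsets of $A$. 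Your fallback (``the class of $\bigwedge_{i\in I}\phi_i$ depends only on $\{[\phi_i] : i \in I\}$, so bound by the powerset of the already-counted set'') is circular as a one-step bound and does not terminate as an iteration: conjunctions nest transfinitely, so the stages $E_0 \subseteq E_1 \subseteq \cdots \subseteq E_\gamma \subseteq \cdots$ of your closure run through \emph{all} ordinals with stagewise bound roughly $\beth_\gamma(|A|)$, which is unbounded; set-sized stages do not give a set-sized union over a proper class of stages, and you have no independent argument that the chain stabilizes. The same problem already infects your base case, where you assert that the degree-$0$ formulas form a set: syntactically they are a proper class (arbitrary index sets, transfinite nesting), and ``set-many up to equivalence'' is precisely what needs proof.

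The gap is fixable by a semantic trace argument, which is what your vague appeal to ``truth conditions'' should become. For a pointed model $(M,w)$ define $f_w \in 2^{\omega \times A}$ by $f_w(n,a) = 1$ iff $M, w^{*^n} \Vdash a$; the index $\omega$ is needed because $\lsim$ shifts evaluation along the $*$-orbit (one cannot assume $w^{**}=w$ in arbitrary models). By induction on the well-founded structure of formulas, satisfaction at $w$ of any formula in the $\{\lsim, \bigwedge, \bigvee\}$-closure of $A$ depends only on $f_w$: atoms read off $f_w(0,\cdot)$, $\lsim$ corresponds to the shift $f_{w^*}(n,a) = f_w(n+1,a)$, and $\bigwedge$, $\bigvee$ act pointwise. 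Hence $\phi \mapsto \{f_w : M, w \Vdash \phi\}$ is injective on equivalence classes, which therefore inject into $\wp(2^{\omega \times A})$, a set --- no transfinite counting required. With this lemma in place of your last paragraph, your induction on $\alpha$ goes through and yields an elementary, self-contained alternative to the paper's route through Barwise's normal-form theorem; as written, however, the decisive step is not established.
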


\begin{proof} Consider first $L^{corr}_{\infty \omega}$. Define the quantifier rank of a formula of $L^{corr}_{\infty \omega}$ following \cite{bar1} (Definition 10. 4) which deals appropriately with the presence of functions in the language. According to Corollary 10.9 in \cite{bar1}, for $\kappa$ some fixed point of the function $\beth$ with cardinality bigger than the cardinality of the signature of  $L^{corr}_{\infty \omega}$ (there is always some such $\kappa$ given that $\beth$ is normal), every formula of  $L^{corr}_{\infty \omega}$ with quantifier rank $\leq \alpha$ is equivalent to a disjunction of size smaller than $\kappa$ of formulas of a certain class $\Delta$ with fewer than $\kappa$ non-equivalent members. Clearly, there are only set-many non-equivalent such disjunctions. Hence, there are only set-many non-equivalent formulas of $L^{corr}_{\infty \omega}$ with quantifier rank $\leq \alpha$. 

Finally since relevant formulas of degree $\leq \alpha$ can be seen via the translation as formulas of $L^{corr}_{\infty \omega}$ with quantifier rank $\leq \beta$ for sufficiently big $\beta$, we have established the result. \end{proof}

Relevant directed bisimulations $-$as bisimulations in modal logic$-$ are ``non-classical" analogues of back and forth games from classical model theory. In this sense, the next definition introduces the analogue of Definition 5.3.3 from \cite{dick}. 

\begin{Def} \label{def:ndbt}
Let $M_1=\langle W_1, R_1, *_1, V_1 \rangle$ and $M_2=\langle W_2, R_2, *_2, V_2 \rangle$ be two models.  A \emph{relevant directed $\alpha$-bisimulation} for  $\mathtt{PROP}$ between  $M_1$ and $M_2$  is a system of pairs of non-empty relations $\langle Z_{01}, Z_{02}\rangle, \dots , \langle Z_{\alpha 1}, Z_{\alpha 2}\rangle$ where
\begin{center}
 $Z_{\beta 1} \subseteq W_1 \times W_2$ and  $Z_{\beta 2}  \subseteq  W_2 \times W_1$ \,\,\,\,\, $(0 \leq \beta \leq \alpha)$
\end{center}
such that 
\begin{center}

$Z_{\alpha 1} \subseteq \dots \subseteq Z_{01} $

$Z_{\alpha 2} \subseteq \dots  \subseteq  Z_{02}$ 

\end{center}
and when  $i, j \in \{1, 2\}$, $0 \leq  \beta \ \textless \ \alpha$ and $0 \leq \gamma \leq \alpha$,

\begin{itemize}
\item [(1)] $xZ_{\gamma i} y$ only if $y^{*_j}Z_{\gamma j} x^{*_i}$
\item [(2)] If $xZ_{(\beta + 1)i}y$ and $R_jybc$ for some $b, c \in W_j$, there are $b^{\prime}, c^{\prime} \in W_i$ such that $R_ixb^{\prime}c^{\prime}$, $bZ_{\beta j} b^{\prime}$ and $c^{\prime}Z_{\beta i} c$.

\item [(3)] If $xZ_{\gamma i} y$ and $p \in \mathtt{PROP}$,

\begin{center}
\begin{itemize}
\item[] $M_i, x \Vdash p$ only if $M_j, y \Vdash p$. 

\end{itemize}
\end{center}
\end{itemize}
\end{Def}

\begin{Pro} \label{pro:karp} Let $(M_1, w_1)$ and $(M_2, w_2)$ be two arbitrary Routley-Meyer models, $\alpha$ an ordinal and $i, j \in \{1, 2\}$. Then, (i) for each relevant formula $\phi$ of  $L^{\rightarrow}_{\infty \omega}$ with degree $\leq \alpha$, $M_i, w_i \Vdash \phi$ only if $M_j, w_j \Vdash \phi$ iff (ii) there is a relevant directed $\alpha$-bisimulation $(\langle Z_{\beta i}, Z_{\beta j} \rangle)_{\beta \leq \alpha}$ such that $w_{i} Z_{\beta j} w_{ j}$  for each $\beta \leq \alpha$.
\end{Pro}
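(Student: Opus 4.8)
The plan is to prove the biconditional by establishing each implication separately; as is typical for Karp-style results, the direction (ii)$\Rightarrow$(i) is a transfinite induction, while (i)$\Rightarrow$(ii) requires constructing a canonical bisimulation, and the real work lies in verifying the back-and-forth clause for $\rightarrow$.

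For (ii)$\Rightarrow$(i), I would fix a bisimulation with the anchoring property and prove, by induction on $\phi$, the strengthened claim: for all $\{k,l\}=\{1,2\}$, all $\beta$ with $dg(\phi)\leq\beta\leq\alpha$, and all $x\in W_k$, $y\in W_l$, if $xZ_{\beta k}y$ then $M_k,x\Vdash\phi$ implies $M_l,y\Vdash\phi$. The point of quantifying over both orderings $k,l$ at once is that the clause for $\lsim$ forces a passage to the reverse relation: if $M_k,x\Vdash\lsim\chi$ then $M_k,x^{*_k}\nVdash\chi$, and condition (1) gives $y^{*_l}Z_{\beta l}x^{*_k}$, so the induction hypothesis applied in the $l,k$ direction (contrapositively) yields $M_l,y^{*_l}\nVdash\chi$, i.e.\ $M_l,y\Vdash\lsim\chi$. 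The atomic, $\bot$, $\bigwedge$ and $\bigvee$ cases are immediate from clause (3) and the definition of $dg$, and the $\rightarrow$ case uses that $dg(\phi\rightarrow\psi)=\sup\{dg(\phi),dg(\psi)\}+1\leq\beta$ forces $\beta\geq\delta+1$ with $\delta=\sup\{dg(\phi),dg(\psi)\}$; by nesting $xZ_{(\delta+1)k}y$, so clause (2) supplies witnesses $b',c'$ to which the induction hypothesis applies at level $\delta$ in both directions. Instantiating the claim at the anchor $w_iZ_{\alpha i}w_j$ yields (i).

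For (i)$\Rightarrow$(ii), I would define the canonical relations directly from degree-bounded theories: for $\beta\leq\alpha$ let $xZ_{\beta i}y$ (with $x\in W_i$, $y\in W_j$) hold iff every $\phi$ with $dg(\phi)\leq\beta$ and $M_i,x\Vdash\phi$ satisfies $M_j,y\Vdash\phi$, and symmetrically for $Z_{\beta j}$. Nesting $Z_{\alpha i}\subseteq\cdots\subseteq Z_{0i}$ is immediate, since demanding preservation of more formulas is a stronger requirement; clause (3) is built into the definition; and clause (1) follows by contraposition, for if $xZ_{\gamma i}y$ and $M_i,x^{*_i}\nVdash\phi$ then $M_i,x\Vdash\lsim\phi$, whence $M_j,y\Vdash\lsim\phi$ and $M_j,y^{*_j}\nVdash\phi$, giving $y^{*_j}Z_{\gamma j}x^{*_i}$. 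Hypothesis (i) says precisely that $w_iZ_{\alpha i}w_j$, hence $w_iZ_{\beta i}w_j$ for every $\beta\leq\alpha$ by nesting, which is the link demanded in (ii); non-emptiness of the companion relations $Z_{\beta j}$ then follows by applying the already-verified clause (1) to this anchor through the Routley star.

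The main obstacle is verifying clause (2) for the canonical relations, and this is where Proposition \ref{pro:equiv} is indispensable. Suppose $xZ_{(\beta+1)i}y$ and $R_jybc$. Using Proposition \ref{pro:equiv} I would pick a \emph{set} of representatives of the degree-$\leq\beta$ formulas and form $\sigma:=\bigwedge\{\chi:dg(\chi)\leq\beta,\ M_j,b\Vdash\chi\}$ and $\tau:=\bigvee\{\chi:dg(\chi)\leq\beta,\ M_j,c\nVdash\chi\}$, both of degree $\leq\beta$, so that $M_j,b\Vdash\sigma$ and $M_j,c\nVdash\tau$. The design of $\sigma,\tau$ is exactly so that $M_i,b'\Vdash\sigma$ is equivalent to $bZ_{\beta j}b'$ and $M_i,c'\nVdash\tau$ is equivalent to $c'Z_{\beta i}c$. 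Hence if no pair $b',c'\in W_i$ with $R_ixb'c'$, $bZ_{\beta j}b'$ and $c'Z_{\beta i}c$ existed, then for every $b',c'$ with $R_ixb'c'$ one would have $M_i,b'\Vdash\sigma$ implies $M_i,c'\Vdash\tau$, that is $M_i,x\Vdash\sigma\rightarrow\tau$; since $dg(\sigma\rightarrow\tau)\leq\beta+1$ and $xZ_{(\beta+1)i}y$, this transfers to $M_j,y\Vdash\sigma\rightarrow\tau$, and applying it along $R_jybc$ with $M_j,b\Vdash\sigma$ forces $M_j,c\Vdash\tau$, contradicting $M_j,c\nVdash\tau$. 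Thus the required witnesses exist and clause (2) holds, completing the construction.
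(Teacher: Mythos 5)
Your proposal is correct and follows essentially the same route as the paper: the strengthened induction on formula complexity (with the direction-swap at $\lsim$ via clause (1) and the level-$\beta$ witnesses from clause (2) at $\rightarrow$) for (ii)$\Rightarrow$(i), and for (i)$\Rightarrow$(ii) the canonical relations by degree-bounded type inclusion, with clause (2) verified by the very formula $\bigwedge(\text{degree-}\leq\beta\text{ type of }b)\rightarrow\bigvee(\text{degree-}\leq\beta\text{ co-type of }c)$ of degree $\leq\beta+1$, using Proposition \ref{pro:equiv} to keep the conjunction and disjunction set-sized. Your contradiction-style presentation of clause (2) is just the contrapositive of the paper's direct transfer, so there is nothing substantive to add.
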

\begin{proof}$ (ii) \Rightarrow (i)$: Assume that (ii). We argue for (i)  for all $\alpha$ simultaneously, by induction on the complexity of $\phi$. 

 The atomic cases as well as $\bot$  are obvious from (3) in Definition \ref{def:ndbt} and the fact that $\bot$ is never true.   For negation, let $\phi = (\lsim \psi)$ and suppose that $M_i, w_i  \Vdash (\lsim \psi) $, so $M_i, w_i^{*_i} \nVdash \psi $. But $ w_j^{*_j}  Z_{\alpha j} w_i^{*_i}$ by (1) in Definition \ref{def:ndbt} since $w_{i} Z_{\alpha j} w_{ j}$ by assumption, and, by inductive hypothesis, $M_j, w_j^{*_j} \nVdash \psi$, so $M_j, w_j \Vdash (\lsim\psi)$ as desired.  Conjunction and disjunction are routine exercises.

The only remaining case is when $\phi = \psi \rightarrow \chi$. By Definition \ref{def:dg}, say that $dg(\phi) = \beta + 1 \leq \alpha$ where $\beta = \mbox{sup}\{dg(\psi), dg(\chi)\}$. 
 Suppose that   $M_i, w_i \Vdash \psi \rightarrow \chi$, which means that if $R_i w_ib^{\prime}c^{\prime}$ for some $b^{\prime},c^{\prime}$, and $M_i, b^{\prime} \Vdash \psi $, then $M_i, c^{\prime} \Vdash \chi$. Now, let $R_j w_jbc$ for arbitrary $b, c$.  We need to show that $M_j, b \Vdash \psi$ only if $M_j, c \Vdash \chi$. To get the contrapositive, we will suppose that $M_j, c \nVdash \chi$.  By the assumption (ii), $w_iZ_{\beta + 1 i}w_j$, so using property (2) in  Definition \ref{def:ndbt}, there are $b^{\prime}, c^{\prime}$ such that $R_i w_ib^{\prime}c^{\prime}$, $bZ_{\beta j} b^{\prime}$ and $c^{\prime}Z_{\beta i} c$. Note that $\langle Z_{0i}, Z_{0j}\rangle, \dots , \langle Z_{\beta i}, Z_{\beta j}\rangle$ is a directed $\beta$-bisimulation between $M_i$ and $M_j$. This follows readily from our assumption that $\langle Z_{0i}, Z_{0j}\rangle, \dots , \langle Z_{\alpha i}, Z_{\alpha j}\rangle$ is a relevant directed $\alpha$-bisimulation between $M_i$ and $M_j$ by verifying (1)-(3) in Definition \ref{def:ndbt}.   By inductive hypothesis, since $M_j, c \nVdash \chi$ and $dg(\chi) \leq \beta$, $M_i, c^{\prime} \nVdash \chi$. Given that  $M_i, w_i \Vdash \psi \rightarrow \chi$, it must be that $M_i, b^{\prime} \nVdash \psi$. But by inductive hypothesis again using the fact that $bZ_{\beta j} b^{\prime}$ and $dg(\psi) \leq \beta$, $M_j, b \nVdash \psi$. Hence, $M_j, w_j \Vdash \psi \rightarrow \chi$.

$ (i) \Rightarrow (ii)$: For a model $S$, and world $w$  from $S$, we denote by  $rel_{\leq \gamma}$-$tp_S(w)$ the relevant type up to degree $\gamma$ of $w$, i.e., the set of all infinitary relevant formulas such that $S, w \Vdash \phi $  and $dg(\phi) \leq \gamma$.  We claim that, on the assumption that (i), the following system of relations defines a relevant directed  $\alpha$-bisimulation  between $M_i$ and $M_j$:

\begin{center}

 $x Z_{\beta i} y$ iff $rel_{\leq \beta}$-$tp_{M_i}(x)  \subseteq rel_{\leq \beta}$-$tp_{M_j}(y)$ ($0\leq \beta \leq \alpha$)($i \neq j$, $i, j \in \{1,2\}$).

\end{center}

Let us first note that  $Z_{\alpha m} \subseteq \dots \subseteq Z_{0m}$ ($m \in \{1,2\}$). By the asumption (i), $Z_{\alpha i}$ is non-empty, since $w_iZ_{0i} w_j$, but the latter also implies that  $w_j^{*_j} Z_{\alpha j} w_i^{*_i}$ as we will see below, so $Z_{\alpha j}$ is non-empty. Hence, $Z_{\alpha m} $ ($m \in \{1,2\}$) has to be non-empty.

 Let $0\leq \beta \leq \alpha$, $i, j \in \{1, 2\}$.  If   $x Z_{\beta i} y$, i.e., $rel_{\leq \beta}$-$tp_{M_i}(x)  \subseteq rel_{\leq \beta}$-$tp_{M_j}(y)$, we see that $rel_{\leq \beta}$-$tp_{M_j}(y^{*_j})  \subseteq rel_{\leq \beta}$-$tp_{M_i}(x^{*_i})$, i.e., $y^{*_j} Z_{\beta j}x^{*_i}$. It suffices to show that  if   $M_j, y^{*_j} \Vdash \phi $ then $M_i, x^{*_i} \Vdash \phi $ for every $\phi$ with $dg(\phi) \leq \beta$. We prove  the contrapositive. Suppose that $M_i, x^{*_i} \nVdash  \phi $, so $M_i, x \Vdash ( \lsim \phi) $ and since $dg(\lsim \phi) = dg(\phi)$ and $rel_{\leq \beta}$-$tp_{M_i}(x)  \subseteq rel_{\leq \beta}$-$tp_{M_j}(y)$, also $M_j, y \Vdash  (\lsim \phi) $. Consequently,  $M_j, y^{*^j} \nVdash \phi $ as we wanted. This takes care of (1) in  Definition \ref{def:ndbt}.

For clause (2) in   Definition \ref{def:ndbt}, suppose that $xZ_{(\beta +1) i}y \ (\beta +1 \leq \alpha)$, i.e., $rel_{\leq \beta +1}$-$tp_{M_i}(x)  \subseteq rel_{\leq \beta +1}$-$tp_{M_j}(y)$, and  $R_j ybc$ for some worlds $b, c$ from  $M_j$. Where Fmla($L^{\rightarrow}_{\infty \omega}$) stands for the class  of propositional relevant formulas of $L^{\rightarrow}_{\infty \omega}$, consider

\begin{center}
$nrel_{\leq \beta}$-$tp_{M_j}(c) = \{\phi :  M_j, c \nVdash \phi, \phi \in \mbox{Fmla}(L^{\rightarrow}_{\infty \omega}), dg(\phi) \leq \beta\}$.
\end{center}

By Proposition \ref{pro:equiv}  we see that $rel_{\leq \beta}$-$tp_{M_j}(b) $ as well as  $ nrel_{\leq  \beta}$-$tp_{M_j}(c)$   can be taken as sets.  It is clear that 
\begin{center}
 $M_j, y \nVdash \bigwedge rel_{\leq \beta}$-$tp_{M_j}(b)  \rightarrow \bigvee nrel_{\leq  \beta}$-$tp_{M_j}(c)$
\end{center}
 since  $R_j ybc$,   $M_j, b \Vdash \bigwedge rel_{\leq  \beta}$-$tp_{M_j}(b)$ but $M_j, c \nVdash \bigvee  nrel_{\leq  \beta}$-$tp_{M_j}(c)$. Observe that
\begin{itemize}
 \item[]$dg( \bigwedge rel_{\leq  \beta}$-$tp_{M_j}(b)  \rightarrow \bigvee nrel_{\leq  \beta}$-$tp_{M_j}(c)) =  \mbox{sup}\{dg( \bigwedge rel_{\leq  \beta}$-$tp_{M_j}(b) ), dg( \bigvee  nrel_{\leq  \beta}$-$tp_{M_j}(c))\} +1$,
\end{itemize}
  but
\begin{center} 
$dg( \bigwedge rel_{\leq  \beta}$-$tp_{M_j}(b) ) = \mbox{sup}\{dg(\delta) : \delta \in  rel_{\leq  \beta}$-$tp_{M_j}(b)\} \leq \beta$
\end{center}
 and 
\begin{center}
$ dg( \bigvee nrel_{\leq  \beta}$-$tp_{M_j}(c))= \mbox{sup}\{dg(\sigma): \sigma \in nrel_{\leq  \beta}$-$tp_{M_j}(c))\} \leq  \beta$, 
\end{center}
so
\begin{center}
 $dg( \bigwedge rel_{\leq  \beta}$-$tp_{M_j}(b)  \rightarrow \bigvee nrel_{\leq  \beta}$-$tp_{M_j}(c)) \leq \beta +1$.

\end{center}
 Thus, since $rel_{\leq \beta +1}$-$tp_{M_i}(x)  \subseteq rel_{\leq \beta +1}$-$tp_{M_j}(y)$,  contraposing,
\begin{center}
 $M_i, x \nVdash \bigwedge rel_{\leq  \beta}$-$tp_{M_j}(b)  \rightarrow \bigvee nrel_{\leq  \beta}$-$tp_{M_j}(c)$,
\end{center}
 which means that there are $b^{\prime}$ and $ c^{\prime}$   such that $R_i xb^{\prime}c^{\prime}$, $M_i, b^{\prime} \Vdash  \bigwedge rel_{\leq  \beta}$-$tp_{M_j}(b)$, and  $M_i, c^{\prime} \nVdash  \bigvee nrel_{\leq  \beta}$-$tp_{M_j}(c)$. Hence, $rel_{\leq  \beta}$-$tp_{M_j}(b) \subseteq rel_{\leq  \beta}$-$tp_{M_i}(b^{\prime}) $, i.e., $bZ_{\beta j}b^{\prime}$. On the other hand, we have that if $ M_j, c \nVdash \phi$ then $M_i, c^{\prime} \nVdash \phi$ whenever  $dg(\phi) \leq\beta$. Contraposing, $rel_{\leq \beta}$-$tp_{M_i}(c^{\prime})  \subseteq rel_{\leq\beta}$-$tp_{M_j}( c)$, i.e.,  $c^{\prime}Z_{\beta i}c$.

Condition (3)   in  Definition \ref{def:ndbt} follows given that atomic formulas have degree 0.

 \end{proof}

\begin{Def} \label{def:dbt}
Let $M_1=\langle W_1, R_1, *_1, V_1 \rangle$ and $M_2=\langle W_2, R_2, *_2, V_2 \rangle$ be two models.  A \emph{relevant directed bisimulation} for  $\mathtt{PROP}$ between  $M_1$ and $M_2$  is a pair of non-empty relations $\langle Z_{1}, Z_{2}\rangle$ where
\begin{center}
 $Z_{1} \subseteq W_1 \times W_2$ and  $Z_{ 2}  \subseteq  W_2 \times W_1$ 
\end{center}
such that 
when  $i, j \in \{1, 2\}$, 

\begin{itemize}
\item [(1)] $xZ_{ i} y$ only if $y^{*_j}Z_{j} x^{*_i}$
\item [(2)] If $xZ_{i}y$ and $R_jybc$ for some $b, c \in W_j$, there are $b^{\prime}, c^{\prime} \in W_i$ such that $R_ixb^{\prime}c^{\prime}$, $bZ_{j} b^{\prime}$ and $c^{\prime}Z_{ i} c$.

\item [(3)] If $xZ_{ i} y$ and $p \in \mathtt{PROP}$,

\begin{center}
\begin{itemize}
\item[] $M_i, x \Vdash p$ only if $M_j, y \Vdash p$. 

\end{itemize}
\end{center}
\end{itemize}
\end{Def}

Next we show an analogue of Karp's celebrated theorem characterizing  $L_{\infty \omega}$-equivalence  in  terms of partial isomorphisms. The corresponding result for modal logic is regarded as  a ``folklore'' theorem.

\begin{Thm} \label{thm:karp}  \emph{(Relevant Karp's Theorem)} Let  $(M_1, w_1)$ and $(M_2, w_2)$ be two models and $i, j \in \{1, 2\}$. Then the following are equivalent:

\begin{itemize}
\item[(i)] for every formula  $\phi$ of $L_{\infty \omega}^{\rightarrow}$, $M_i, w_i \Vdash \phi$ only if $M_j, w_j \Vdash \phi$

\item[(ii)] there is a relevant directed bisimulation $\langle Z_i, Z_j\rangle$ between $M_1$ and $M_2$ such that $w_i Z_i w_j$.

\end{itemize}

   \end{Thm}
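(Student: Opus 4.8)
The plan is to derive both implications from the approximation result Proposition~\ref{pro:karp}, which already carries all the genuine semantic content at each fixed ordinal level; the only additional ingredient is a stabilization argument that lets one pass from the graded $\alpha$-bisimulations to a single ungraded one.

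For the direction $(ii)\Rightarrow(i)$ I would argue as follows. Given a full relevant directed bisimulation $\langle Z_i, Z_j\rangle$ with $w_i Z_i w_j$, and an arbitrary formula $\phi$ of $L^{\rightarrow}_{\infty\omega}$, note that $dg(\phi)\leq\alpha$ for some ordinal $\alpha$. Form the \emph{constant} graded family $Z_{\beta i} := Z_i$ and $Z_{\beta j} := Z_j$ for all $\beta\leq\alpha$. The nesting requirements of Definition~\ref{def:ndbt} hold trivially (every inclusion is an equality), and clauses (1)--(3) there reduce verbatim to clauses (1)--(3) of Definition~\ref{def:dbt}; moreover $w_i Z_{\beta i} w_j$ for every $\beta\leq\alpha$. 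Proposition~\ref{pro:karp}, direction $(ii)\Rightarrow(i)$, then gives that $M_i,w_i\Vdash\phi$ implies $M_j,w_j\Vdash\phi$, and since $\phi$ was arbitrary, (i) follows.

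For $(i)\Rightarrow(ii)$ I would reuse the type construction from the proof of Proposition~\ref{pro:karp}, but now aim at the full type. For each ordinal $\beta$ set $Z_{\beta i}=\{(x,y): rel_{\leq \beta}$-$tp_{M_i}(x)\subseteq rel_{\leq \beta}$-$tp_{M_j}(y)\}$ and symmetrically for $Z_{\beta j}$. As $\beta$ grows these relations can only shrink, so $(Z_{\beta i})_\beta$ and $(Z_{\beta j})_\beta$ are $\subseteq$-decreasing sequences of subsets of the fixed sets $W_i\times W_j$ and $W_j\times W_i$; being non-increasing, they stabilize at some ordinal $\theta$, and the stable relations coincide with full relevant type inclusion $\{(x,y): tp_{M_i}(x)\subseteq tp_{M_j}(y)\}$, because a formula lies in the full type iff it lies in $rel_{\leq \beta}$-$tp$ for its degree $\beta$. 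Put $Z_i:=Z_{\theta i}$ and $Z_j:=Z_{\theta j}$. Clauses (1) and (3) of Definition~\ref{def:dbt} are verified exactly as the degree-$\leq\theta$ versions in Proposition~\ref{pro:karp}, and $w_i Z_i w_j$ is immediate from (i). For clause (2), given $x Z_i y$ and $R_j ybc$, I run the forth argument of Proposition~\ref{pro:karp} at level $\theta$: the formula $\bigwedge rel_{\leq \theta}$-$tp_{M_j}(b)\rightarrow\bigvee nrel_{\leq \theta}$-$tp_{M_j}(c)$ is legitimate and of degree $\leq\theta+1$ by Proposition~\ref{pro:equiv}, and since stabilization yields $Z_{(\theta+1)i}=Z_{\theta i}=Z_i$ we have $x Z_{(\theta+1)i} y$, producing $b',c'$ with $R_i xb'c'$, $b Z_{\theta j} b'$ and $c' Z_{\theta i} c$, that is $b Z_j b'$ and $c' Z_i c$.

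The main obstacle is exactly the proper-class nature of the full relevant type: since $L^{\rightarrow}_{\infty\omega}$ contains non-equivalent formulas of cofinally many degrees, the full type of a world need not be a set modulo equivalence, so one cannot naively form $\bigwedge tp_{M_j}(b)$ to certify the ungraded forth condition. The stabilization argument is precisely what circumvents this: Proposition~\ref{pro:equiv} keeps every conjunction and disjunction I actually write down set-sized (bounded at level $\theta$), while stabilization guarantees that relatedness at the bounded level $\theta$ already coincides with full $L^{\rightarrow}_{\infty\omega}$-equivalence, so that $\theta$ behaves as a fixed point for the forth clause. Pinning down that the sequences genuinely stabilize and that passing from $\theta+1$ back to $\theta$ stays within the same relation is the delicate point of the argument.
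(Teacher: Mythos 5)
Your proof is correct, and while your direction $(ii)\Rightarrow(i)$ coincides with the paper's (the constant graded family plus Proposition \ref{pro:karp}), your argument for $(i)\Rightarrow(ii)$ takes a genuinely different route through the key obstacle you rightly identify, namely that full relevant types cannot be conjoined. The paper defines $Z_i$ outright by full type inclusion $rel$-$tp_{M_i}(x) \subseteq rel$-$tp_{M_j}(y)$ --- exactly the relation your decreasing sequences stabilize to --- and certifies the forth clause by reductio with no degree bookkeeping at all: assuming no admissible $b', c'$ exist, it first observes that $\{b', c' \in W_i : R_ixb'c'\} \neq \emptyset$ (otherwise $M_i, x \Vdash \top \rightarrow \bot$ would transfer to $y$ and force $M_j, c \Vdash \bot$), then chooses for each pair with $R_ixb'c'$ a witnessing formula $\phi_{b'}$ or $\phi_{c'}$, patches the missing cases with $\top$ and $\bot$ via a transformation $\tau$, and refutes type inclusion at $x, y$ with the single formula $\bigwedge_{b'} \tau(\phi_{b'}) \rightarrow \bigvee_{c'} \tau(\phi_{c'})$; this is a legitimate formula because its conjunction and disjunction are indexed by the \emph{set of worlds} $W_i$, not by formulas, so the theorem's proof never invokes Proposition \ref{pro:equiv}. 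Your route instead bounds conjunctions by degree: Proposition \ref{pro:equiv} keeps $\bigwedge rel_{\leq \theta}$-$tp_{M_j}(b) \rightarrow \bigvee nrel_{\leq \theta}$-$tp_{M_j}(c)$ set-sized, and your stabilization ordinal $\theta$ --- which indeed exists, since the antitone families live inside the fixed sets $W_1 \times W_2$ and $W_2 \times W_1$, so each pair has a least drop-out level and $\theta$ can be taken as the supremum of these --- gives $Z_{(\theta+1)i} = Z_{\theta i}$, so the level-$(\theta+1)$-to-level-$\theta$ forth step of Proposition \ref{pro:karp} loses no rank at the fixed point. Each approach buys something: the paper's is shorter, self-contained, and bounds the distinguishing formula by the cardinality of $W_i$ rather than by the supply of non-equivalent formulas of bounded degree; yours reuses the approximation machinery of Proposition \ref{pro:karp} wholesale and makes the fixed-point phenomenon explicit. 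One small point to make explicit in a final write-up: Definition \ref{def:dbt} requires $Z_j$ to be non-empty as well, which follows by applying your clause (1) to $w_i Z_i w_j$, yielding $w_j^{*_j} Z_j w_i^{*_i}$.
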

\begin{proof} $(ii) \Rightarrow (i)$: This direction follows from Proposition \ref{pro:karp}, and the facts that $\langle Z_i, Z_j\rangle$ can be taken to be a relevant directed $\alpha$-bisimulation for any $\alpha$ and that every formula of   $L_{\infty \omega}^{\rightarrow}$  has some degree $\alpha$.

$(i) \Rightarrow (ii)$: We claim that 
 
\begin{center}

 $x Z_{ i} y$ iff $rel$-$tp_{M_i}(x)  \subseteq rel$-$tp_{M_j}(y)$ ($i \neq j$, $i, j \in \{1,2\}$).

\end{center}
defines a relevant directed bisimulation where $rel$-$tp_{M_i}(x)$ ($i=1, 2$) is the collection of all formulas of $L_{\infty \omega}^{\rightarrow}$ holding at $x$ in $M_i$.

 For clause (1) in Definition \ref{def:dbt}, suppose $x Z_{\beta i} y$, i.e., $rel$-$tp_{M_i}(x)  \subseteq rel$-$tp_{M_j}(y)$. We have that that $rel$-$tp_{M_j}(y^{*_j})  \subseteq rel$-$tp_{M_i}(x^{*_i})$, i.e., $y^{*_j} Z_{\beta j}x^{*_i}$. It suffices to show that  if   $M_j, y^{*_j} \Vdash \phi $ then $M_i, x^{*_i} \Vdash \phi $ for every $\phi$. We prove  the contrapositive. Suppose that $M_i, x^{*_i} \nVdash  \phi $, so $M_i, x \Vdash ( \lsim \phi) $ and since $rel$-$tp_{M_i}(x)  \subseteq rel$-$tp_{M_j}(y)$, also $M_j, y \Vdash  (\lsim \phi) $. Consequently,  $M_j, y^{*^j} \nVdash \phi $ as we wanted. 

Now we have to take care of clause (2) in Definition \ref{def:dbt}. Assume that $xZ_{ i}y $, i.e., $rel$-$tp_{M_i}(x)  \subseteq rel$-$tp_{M_j}(y)$, and  $R_j ybc$ for some worlds $b, c$ from  $M_j$.  Suppose for reductio that there are no $b^{\prime}, c^{\prime} \in W_i$ such that $R_ixb^{\prime}c^{\prime}$, $bZ_{j} b^{\prime}$ (i.e., $rel$-$tp_{M_j}(b)  \subseteq rel$-$tp_{M_i}( b^{\prime})$) and $c^{\prime}Z_{ i} c$ (i.e., $rel$-$tp_{M_i}(c^{\prime})  \subseteq rel$-$tp_{M_j}(c)$). We  first notice that  $\{ b^{\prime}, c^{\prime} \in W_i  : R_ix b^{\prime}c^{\prime} \} \neq \emptyset$, for otherwise $M_i, x \Vdash \top \rightarrow \bot$, so $M_j, y \Vdash  \top \rightarrow \bot$, which implies that $M_j, c \Vdash \bot$, which is impossible.  Now, for any $b^{\prime}, c^{\prime} \in W_i$ such that   $R_ixb^{\prime}c^{\prime}$ there are formulas $\phi_{b^{\prime}}$ and $\phi_{c^{\prime}}$ such that either (i) $M_j, b \Vdash \phi_{b^{\prime}}$ and $M_i, b^{\prime} \nVdash \phi_{b^{\prime}}$ or (ii) $M_i, c^{\prime} \Vdash \phi_{c^{\prime}}$ and  $M_j, c \nVdash \phi_{c^{\prime}}$.  For any $b^{\prime}, c^{\prime} \in W_i$ such that   $R_ixb^{\prime}c^{\prime}$ define the transformation $\tau$ as follows: 

\begin{center}
$
\tau (\phi_{b^{\prime}}) = \begin{cases} \top &\mbox{if (i) does not hold} \\
\phi_{b^{\prime}} & \mbox{otherwise}. \end{cases} $

$
\tau (\phi_{c^{\prime}}) = \begin{cases} \bot &\mbox{if (ii) does not hold} \\
\phi_{c^{\prime}} & \mbox{otherwise}. \end{cases} $

\end{center}
Next, it suffices to consider the formula 
\begin{center}
$\bigwedge_{\exists v Rxb^{\prime}v \atop{b^{\prime} \in W_i}} \tau (\phi_{b^{\prime}})  \rightarrow \bigvee_{\exists v R_ixvc^{\prime} \atop{c^{\prime} \in W_i}} 
\tau (\phi_{c^{\prime}})$.
\end{center}
 A moments reflection shows that
\begin{center}
 $M_j, y \nVdash \bigwedge_{\exists v R_ixb^{\prime}v \atop{b^{\prime} \in W_i}} \tau (\phi_{b^{\prime}})  \rightarrow \bigvee_{\exists v R_ixvc^{\prime} \atop{c^{\prime} \in W_i}} 
\tau (\phi_{c^{\prime}})$
\end{center}
 but 
\begin{center}
$M_i, x \Vdash \bigwedge_{\exists v R_ixb^{\prime}v \atop{b^{\prime} \in W_i}} \tau (\phi_{b^{\prime}})  \rightarrow \bigvee_{\exists v R_ixvc^{\prime} \atop{c^{\prime} \in W_i}} 
\tau (\phi_{c^{\prime}})$, 
\end{center}
contradicting the assumption that $rel$-$tp_{M_i}(x)  \subseteq rel$-$tp_{M_j}(y)$.

Finally, clause (3) in Definition \ref{def:dbt} is immediate.
 \end{proof}

Theorem \ref{thm:karp} is nothing but the infinitary version of Theorem 13.5 from \cite{restall}. Quite frequently in infinitary logic we are able to obtain counterparts to results  provable for finitary languages with the restriction that the models under consideration be finite.

\section{Scott's theorem}\label{sec:s}

Next we establish a result  implying a corollary  analogous  to Scott's isomorphism theorem in  classical infinitary logic.  The corresponding theorem for modal logic was proven in \cite{ber}.

Since the finitary relevant logic is considerably weaker than first order logic and modal logic in terms of expressive power, it only seems natural that to get a version of Scott's isomorphism theorem one has to go beyond the expressive power gained by  merely  adding countable conjunctions. In fact, Corollary \ref{scott} requires us to add conjunctions of cardinality at most $ |2^{\omega}|$.

There is another difference between the following result and Scott's isomorphism theorem or van Benthem's modal version of it. Scott's theorem gives a formula $\phi_M$ characterizing up to isomorphism a given countable model $M$ among the class of all countable models, so Scott's formula only depends on the model $M$. In contrast,  we give a formula that implies that there is a relevant directed bisimulation between two arbitrary countable models but which depends on both. This difference is due to the nature of relevant directed bisimulations. Contrary to isomorphism or bisimulation, a relevant directed bisimulation between $M_1$ and $M_2$  demands things from  both models. Recall that it is not a relation from $W_1 \times W_2$ but a pair of relations from  $W_1 \times W_2$ and  $W_2 \times W_1$ respectively. 

\begin{Thm}   Let  $(M_1, w_1)$ and $(M_2, w_2)$ be two  models in some $K$ such that $K \subseteq V_{\bf B}$, $\kappa $  the least infinite cardinal $\geqslant$  $\mbox{\emph{sup}}\{|W_1| , |W_2|\}$,  and $\lambda = \mbox{\emph{sup}}\{| \mathtt{PROP} |, 2^{\kappa}\}$.  Then, when  $i, j \in \{1, 2\}$, there is a  formula $\theta^{w_i}$ of $L_{\lambda^+\omega}^{\rightarrow}$ such that (1) $M_i, w_i \Vdash \theta^{w_i}$,  and  (2) $M_j, w_j \Vdash \theta^{w_i}$   iff there is a relevant directed bisimulation $(Z_i, Z_j)$ between $M_i$ and $M_j$ such that $w_iZ_iw_j$. \end{Thm}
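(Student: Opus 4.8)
The plan is to build a transfinite hierarchy of characteristic formulas and then run a Scott-height (stabilization) argument, adapting the classical and modal constructions to the directed setting.

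First I would define, for each world $x$ of $M_i$ and each ordinal $\gamma$, a \emph{characteristic formula} $\chi^{\gamma}_{x,i}$ of degree $\leq\gamma$ whose intended meaning is one-directional type inclusion: for any world $y$ of $M_j$, $M_j, y\Vdash\chi^{\gamma}_{x,i}$ iff $rel_{\leq\gamma}$-$tp_{M_i}(x)\subseteq rel_{\leq\gamma}$-$tp_{M_j}(y)$, that is, iff $xZ_{\gamma i}y$ in the notation of the proof of Proposition \ref{pro:karp}. At level $0$ I set $\chi^{0}_{x,i}$ to be the conjunction of all propositional variables true at $x$ together with all formulas $\lsim p$ true at $x$, the latter recording the atomic behaviour of $x^{*_i}$; here the fact that $x=x^{**_i}$, guaranteed by $K\subseteq V_{\bf B}$, ensures that only $x$ and $x^{*_i}$ are relevant. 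At limit stages I take $\chi^{\gamma}_{x,i}=\bigwedge_{\beta<\gamma}\chi^{\beta}_{x,i}$. The successor clause must encode the back-and-forth condition (2) of Definition \ref{def:ndbt} through the implication connective: mimicking the formula $\bigwedge rel_{\leq\beta}$-$tp_{M_j}(b)\rightarrow\bigvee nrel_{\leq\beta}$-$tp_{M_j}(c)$ used in the proof of Proposition \ref{pro:karp}, I would conjoin to $\chi^{\gamma+1}_{x,i}$, for each degree-$\leq\gamma$ ``successor type-pair'' not realized by any $R_i$-successor pair of $x$, a single implication $\sigma\rightarrow\delta$ built from lower-level characteristic formulas, chosen so that it is true at $x$ by antecedent/consequent failure but false at any $y$ possessing such an uncovered $R_j$-successor pair. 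Clause (1) need not be imposed separately, since it holds automatically for type inclusion (as already verified in Proposition \ref{pro:karp}), and clause (3) is absorbed at level $0$. That $\chi^{\gamma}_{x,i}$ really captures $Z_{\gamma i}$ is then a transfinite induction on $\gamma$ copying the two inductions in the proof of Proposition \ref{pro:karp}.

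Next I would check that each $\chi^{\gamma}_{x,i}$ is genuinely a formula of $L_{\lambda^+\omega}^{\rightarrow}$. The branchings are: over propositional variables at level $0$ (at most $|\mathtt{PROP}|$ many); over the $R_i$-successor pairs of $x$ and the degree-$\leq\gamma$ type-pairs in the successor clause (at most $2^{\kappa}$ many, since such a type is pinned down by a set of world-characteristic formulas and there are at most $\kappa$ worlds); and over $\beta<\gamma$ at limits. Each branching thus has size at most $\lambda=\mbox{sup}\{|\mathtt{PROP}|,2^{\kappa}\}$, which is exactly the bound in the statement. The one delicate point is the restriction recorded by the subscript $\omega$: the formula may contain no infinite iteration of $\rightarrow$. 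This is ensured because every limit stage is an infinite conjunction, so that, just as for Humberstone's $\xrightarrow{\omega}$ discussed in \S\ref{sec:rm}, the characteristic formulas carry only unboundedly long but individually finite chains of implications; hence $\chi^{\gamma}_{x,i}\in L_{\lambda^+\omega}^{\rightarrow}$ even at limit $\gamma$. By Proposition \ref{pro:equiv} I may assume the conjunctions and disjunctions range over non-equivalent formulas, so everything stays set-sized.

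The key step is then stabilization. The relations $(Z_{\gamma i})_\gamma$ and $(Z_{\gamma j})_\gamma$ are decreasing in $\gamma$ (by the inclusions in Definition \ref{def:ndbt}) and live inside $(W_i\times W_j)\cup(W_j\times W_i)$, a set of cardinality at most $\kappa$; hence they are eventually constant, reaching their final value at some $\gamma^{*}<\kappa^+\leq\lambda^+$. I would show that at $\gamma^{*}$ the pair $\langle Z_{\gamma^{*}1},Z_{\gamma^{*}2}\rangle$ is a full relevant directed bisimulation in the sense of Definition \ref{def:dbt}: clauses (1) and (3) are inherited from each level, while clause (2) for the full bisimulation is precisely clause (2) of the $(\gamma^{*}+1)$-bisimulation rewritten using $Z_{(\gamma^{*}+1)m}=Z_{\gamma^{*}m}$. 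Putting $\theta^{w_i}:=\chi^{\gamma^{*}}_{w_i,i}$, property (1) is immediate since a world satisfies its own characteristic formula, and for (2) I argue that $M_j,w_j\Vdash\theta^{w_i}$ iff $w_iZ_{\gamma^{*}i}w_j$; if this holds then $\langle Z_{\gamma^{*}1},Z_{\gamma^{*}2}\rangle$ is the desired bisimulation with $w_iZ_iw_j$, while conversely any relevant directed bisimulation with $w_iZ_iw_j$ forces $rel$-$tp_{M_i}(w_i)\subseteq rel$-$tp_{M_j}(w_j)$ by the sound direction of Theorem \ref{thm:karp}, whence in particular $w_iZ_{\gamma^{*}i}w_j$.

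I expect the main obstacle to be the precise design of the successor clause, so that it genuinely captures $Z_{(\gamma+1)i}$ while staying model-internal (phrased only with worlds and characteristic formulas of $M_i$), of admissible width $\leq\lambda$, and of finite implication-depth along each branch. Once the right canonical implications are isolated, the inductive verification and the stabilization count are routine adaptations of Proposition \ref{pro:equiv}, Proposition \ref{pro:karp}, and Theorem \ref{thm:karp}.
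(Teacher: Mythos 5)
There is a genuine gap, and it sits exactly where you flag uncertainty: the design of the successor clause. Your $\chi^{\gamma+1}_{x,i}$ conjoins only \emph{positive} implications true at $x$, and you dismiss clause (1) of Definition \ref{def:dbt} on the grounds that it ``holds automatically for type inclusion.'' That is true of the genuine type-inclusion relations, but your argument needs the biconditional ``$M_j,y\Vdash\chi^{\gamma}_{x,i}$ iff $rel_{\leq\gamma}$-$tp_{M_i}(x)\subseteq rel_{\leq\gamma}$-$tp_{M_j}(y)$,'' and the left-to-right direction is false for the formula as you have built it. Since $dg(\lsim\phi)=dg(\phi)$, the degree-$(\gamma+1)$ type of $x$ contains negated implications $\lsim(\sigma\rightarrow\delta)$, which assert that $x^{*_i}$ \emph{fails} $\sigma\rightarrow\delta$; nothing in your formula controls the implicational behaviour of $y^{*_j}$ beyond the level-$0$ literals. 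Concretely, with $\mathtt{PROP}=\{p\}$ take $x$ with $x^{*_i}\neq x$ and $R_i x^{*_i}ab$ where $a\Vdash p$, $b\nVdash p$, so $M_i,x\Vdash\lsim(p\rightarrow p)$, a degree-$1$ formula; let $y$ agree with $x$ on literals and satisfy every positive implication true at $x$, but give $y^{*_j}$ no $R_j$-successor pairs, so $M_j,y\nVdash\lsim(p\rightarrow p)$. Then $y$ satisfies your $\chi^{1}_{x,i}$ while type inclusion fails already at degree $1$. Consequently your inductive characterization collapses at the first successor stage, the relations defined by satisfaction of $\chi^{\gamma}_{x,i}$ need not satisfy clause (1), and the stabilized pair $\langle Z_{\gamma^{*}1},Z_{\gamma^{*}2}\rangle$ is not guaranteed to be a relevant directed bisimulation at all.

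The paper's construction avoids this precisely by conjoining, at each successor stage, not only the true implications $\phi^{\eta b}_{M_i}\rightarrow\bigvee_{d\in X}\phi^{\eta d}_{M_j}$ but also all true \emph{negated} implications $\lsim(\phi^{\eta b}_{M_i}\rightarrow\bigvee_{d\in X}\phi^{\eta d}_{M_j})$; these conjuncts record the $*$-behaviour of the world and are what make the inductive verification of clause (1) go through (the paper proves, by induction on $\beta$, that $M_j,v\Vdash\phi^{\beta u}_{M_j}$ implies $M_i,u^{*_i}\Vdash\phi^{\beta v^{*_j}}_{M_i}$, using exactly those negated conjuncts at successor stages). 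Note also that the paper never needs your full biconditional with type inclusion: it verifies the bisimulation clauses directly for the satisfaction-defined relations. The rest of your outline is sound and close to the paper's: your cardinality bookkeeping for the width $\lambda$ and the finite implication-depth observation are correct, and your stabilization of the decreasing relations $Z_{\gamma i}$ inside a set of $\leq\kappa$ pairs is a legitimate (slightly different) alternative to the paper's device of bounding the functions $f,g:W_1\times W_2\longrightarrow\lambda^{+}$ via $\kappa<\mbox{cf}(\lambda^{+})$. But without the negated-implication conjuncts, the central claim of the construction fails, so the proof as proposed does not go through.
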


\begin{proof} We start by defining for each world $a$ of $M_i$ the formula $ \phi^{\eta a}_{M_j}$ $-$simultaneously with  $ \phi^{\eta b}_{M_i}$ for $b \in W_j$$-$ by induction on the ordinal $\eta \ \textless \ \lambda^+$ as follows:

\begin{center}
\begin{tabular}{llll}
$\phi^{0 a}_{M_j}$ & $=$&   the set of all literals  satisfied by $(M_i,  a)$,                               \\
$\phi^{\eta a}_{M_j}$ &$=$ &  $ \bigwedge_{\xi \ \textless \ \eta} \phi^{\xi a}_{M_j}$ if $\eta$ is a limit ordinal,                               \\
                                                           $\phi^{\eta + 1 \  a}_{M_j} $           & $=$ &   $\phi^{\eta a}_{M_j}  \wedge \bigwedge_{b \in W_j, \atop{X \subseteq W_i, \atop{M_i, a \Vdash \phi^{\eta b}_{M_i} \rightarrow   \bigvee_{d \in X} \phi^{\eta d}_{M_j}}}} \phi^{\eta b}_{M_i} \rightarrow   \bigvee_{d \in X} \phi^{\eta d}_{M_j} $ \\
                                                                     &       &      $\wedge  \bigwedge_{b \in W_j, \atop{X \subseteq W_i, \atop{M_i, a \Vdash (\lsim (\phi^{\eta b}_{M_i} \rightarrow   \bigvee_{d \in X} \phi^{\eta d}_{M_j}))}}} (\lsim(\phi^{\eta b}_{M_i} \rightarrow   \bigvee_{d \in X} \phi^{\eta d}_{M_j})) $.    \\                                                                                                                             
\end{tabular}
 \end{center}

Observe that when $\gamma \ \textless \ \beta \ \textless \ \lambda^+$, 

\begin{center}

$M_j, a^{\prime} \Vdash \phi^{\beta a}_{M_j} $ implies that $M_j, a^{\prime} \Vdash \phi^{\gamma a}_{M_j} $.

\end{center}

This can be seen by induction on $\beta$. The case when $\beta = 0$ is true by antecedent failure. If $\beta = \eta + 1$, either $\eta = \gamma$ or $\gamma \ \textless \ \eta$. If the first, since 
\begin{center}

$M_j, a^{\prime} \Vdash \phi^{\gamma a}_{M_j}  \wedge \bigwedge_{b \in W_j, \atop{X \subseteq W_i, \atop{M_i, a \Vdash \phi^{\gamma b}_{M_i} \rightarrow   \bigvee_{d \in X} \phi^{\gamma d}_{M_j}}}} \phi^{\gamma b}_{M_i} \rightarrow   \bigvee_{d \in X} \phi^{\gamma d}_{M_j}$ implies that $M_j, a^{\prime} \Vdash \phi^{\gamma a}_{M_j} $,
\end{center}
we have that

\begin{center}

$M_j, a^{\prime} \Vdash \phi^{\beta a}_{M_j} $ implies that $M_j, a^{\prime} \Vdash \phi^{\gamma a}_{M_j} $.

\end{center}
If the second, since  
\begin{center}

$M_j, a^{\prime} \Vdash \phi^{\beta a}_{M_j} $ implies that $M_j, a^{\prime} \Vdash \phi^{\eta a}_{M_j} $,

\end{center}
and, by inductive hypothesis, 

\begin{center}

$M_j, a^{\prime} \Vdash \phi^{\eta a}_{M_j} $ implies that $M_j, a^{\prime} \Vdash \phi^{\gamma a}_{M_j} $,

\end{center}
we get what we needed.

Now let us define a map $f : W_1 \times W_2 \longrightarrow \lambda^+$ in the following way:
\begin{center}
$
f(a, a^{\prime}) = \begin{cases} \mbox{the least ordinal} \ \xi \ \textless \  \lambda^+ \ \mbox{such that}\ M_2, a^{\prime} \nVdash \phi^{\xi a}_{M_2}&\mbox{if there is some} \\
0& \mbox{otherwise}. \end{cases} $

\end{center}
Given that $| W_1 \times W_2 | = \kappa \ \textless \ \mbox{cf}( \lambda^+) = \lambda^+$, we see that there must be $\xi_0 \ \textless \ \lambda^+$ such that the range of $f$ is a subset of $\xi_0$. Consequently, for every $\beta$ such that $\xi_0 \ \textless \ \beta \ \textless \ \lambda^+$,
\begin{center} 
$M_2, a^{\prime} \Vdash \phi^{\xi_0 a}_{M_2}$ implies that $M_2, a^{\prime} \Vdash \phi^{\beta a}_{M_2}$,
\end{center}
for otherwise we have that there is an ordinal $\gamma$ with $\xi_0  \ \textless \ \gamma \leq \beta$ which is the smallest ordinal such that $M_2, a^{\prime} \nVdash \phi^{\gamma a}_{M_2}$, contradicting the fact that the range of $f$ is a subset of $\xi_0$.

Similarly, we define $g : W_2 \times W_1 \longrightarrow \lambda^+$ as
\begin{center}

$f(a, a^{\prime}) = \begin{cases} \mbox{the least ordinal} \ \xi \ \textless \  \lambda^+ \ \mbox{such that} \ M_1, a^{\prime} \nVdash \phi^{\xi a}_{M_1}&\mbox{if there is some} \\
0& \mbox{otherwise}, \end{cases} $

\end{center}
and obtain $\xi_1  \ \textless \  \lambda^+$ such that the range of $g$ is a subset of  $\xi_1 $. As before, 
for every $\beta$ such that $\xi_1 \ \textless \ \beta \ \textless \ \lambda^+$,
\begin{center} 
$M_1, a^{\prime} \Vdash \phi^{\xi_1 a}_{M_1}$ implies that $M_1, a^{\prime} \Vdash \phi^{\beta a}_{M_1}$.
\end{center}

Choose  $\xi$ to be  $\mbox{sup}\{\xi_0, \xi_1\}$.  By the above, when $\xi \ \textless \ \beta \ \textless \ \lambda^+$,
\begin{center}
$M_2, a^{\prime} \Vdash \phi^{\xi a}_{M_2}$ implies that $M_2, a^{\prime} \Vdash \phi^{\beta a}_{M_2}$,
\end{center}
and
\begin{center}
 $M_1, a^{\prime} \Vdash \phi^{\xi a}_{M_1}$ implies that $M_1, a^{\prime} \Vdash \phi^{\beta a}_{M_1}$.
\end{center}
We claim that the relations $uZ_1v$ iff $M_2, v \Vdash \phi^{\xi u}_{M_2}$  and   $uZ_2v$ iff $M_1, v \Vdash \phi^{\xi u}_{M_1}$ satisfy all clauses in Definition \ref{def:dbt}.

For  (1)  in Definition \ref{def:dbt},  we will show  by induction that  when $i, j \in \{1, 2\}$,  for all $\beta$, if $u$ is a  world of  $M_i$ and  $M_j, v \Vdash \phi^{\beta u}_{M_j}$ then $M_i, u^{*_i} \Vdash \phi^{\beta v^{*_j}}_{M_i}$. In particular,  if $uZ_iv$, i.e., $M_j, v \Vdash \phi^{\xi u}_{M_j}$ then $M_i, u^{*_i} \Vdash \phi^{\xi  v^{*_j}}_{M_i}$, i.e., $v^{*_j}Z_ju^{*_i}$.

 Let $\beta =0$, and assume that $M_j, v \Vdash \phi^{0 u}_{M_j}$. We need to show that every literal satisfied by $v^{*_j}$ at $M_j$ is also satisfied by $u^{*_i} $ at  $M_i$, that is: (a) $M_j, v^{*_j} \Vdash p$   only if $M_i, u^{*_i} \Vdash p$, and (b) $M_j, v^{*_j} \Vdash (\lsim p)$   only if $M_i, u^{*_i} \Vdash (\lsim p)$. To prove the contrapositive of (a) assume that $M_i, u^{*_i} \nVdash p$, so  $M_i, u \Vdash (\lsim p)$, but $M_j, v \Vdash \phi^{0 u}_{M_j}$, hence $M_j, v \Vdash (\lsim p)$, i.e, $M_j, v^{*_j} \nVdash p$. Now, for the contrapositive of (b)  assume that $M_i, u^{*_i} \nVdash (\lsim p)$, so $M_i, u^{*_i *_i} \Vdash  p$ but $ u^{*_i *_i} = u$, so $M_i, u\Vdash  p$. However, $M_j, v \Vdash \phi^{0 u}_{M_j}$, which implies that $M_j, v\Vdash  p$, i.e., $M_j, v^{*_j *_j}\Vdash  p$, hence 
$M_j, v^{*_j }\nVdash  (\lsim p)$ as desired.

If $\beta$ is a limit ordinal and  $M_j, v \Vdash \phi^{\beta u}_{M_j}$, then $M_j, v \Vdash \phi^{\gamma u}_{M_j}$ for all $\gamma \ \textless \ \beta$, and by inductive hypothesis, $M_i, u^{*_i} \Vdash \phi^{\gamma v^{*_j}}_{M_i}$ for all $\gamma \ \textless \ \beta$, which implies that $M_i, u^{*_i} \Vdash \phi^{\beta v^{*_j}}_{M_i}$.

If $\beta = \gamma + 1$ and $M_j, v \Vdash \phi^{\gamma +1 u}_{M_j}$, $M_j, v \Vdash \phi^{\gamma u}_{M_j}$, and by inductive hypothesis, $M_i, u^{*_i} \Vdash \phi^{\gamma v^{*_j}}_{M_i}$.  Recall that

\begin{center}
\begin{tabular}{llll}
\\
                                                           $\phi^{\gamma + 1 \  v^{*_j}}_{M_i} $           & $=$ &   $\phi^{\gamma \  v^{*_j}}_{M_i}   \wedge \bigwedge_{b \in W_i, \atop{X \subseteq W_j, \atop{M_j, v^{*_j} \Vdash \phi^{\gamma b}_{M_j} \rightarrow   \bigvee_{d \in X} \phi^{\gamma d}_{M_i}}}} \phi^{\gamma b}_{M_j} \rightarrow   \bigvee_{d \in X} \phi^{\gamma d}_{M_i} $ \\
                                                                     &       &      $\wedge  \bigwedge_{b \in W_i, \atop{X \subseteq W_j, \atop{M_j, v^{*_j} \Vdash (\lsim (\phi^{\gamma b}_{M_j} \rightarrow   \bigvee_{d \in X} \phi^{\gamma d}_{M_i}))}}} (\lsim(\phi^{\gamma b}_{M_j} \rightarrow   \bigvee_{d \in X} \phi^{\gamma d}_{M_i})) $.    \\                                                                                                                             
\end{tabular}
 \end{center}
Hence, it remains to show that (a)  $M_j, v^{*_j} \Vdash \phi^{\gamma b}_{M_j} \rightarrow \bigvee_{d \in X} \phi^{\gamma d}_{M_i} $ for some world $b$ of $M_i$ and $X \subseteq W_j$ only if   $M_i, u^{*_i} \Vdash \phi^{\gamma b}_{M_j} \rightarrow \bigvee_{d \in X} \phi^{\gamma d}_{M_i} $, and (b) $M_j, v^{*_j} \Vdash (\lsim (\phi^{\gamma b}_{M_j} \rightarrow \bigvee_{d \in X} \phi^{\gamma d}_{M_i})) $ for some world $b$ of $M_i$ and $X \subseteq W_j$ only if   $M_i, u^{*_i} \Vdash (\lsim ( \phi^{\gamma b}_{M_j} \rightarrow \bigvee_{d \in X} \phi^{\gamma d}_{M_i} )) $. These two follow similarly to (a) and (b) in the case when $\beta = 0$.

The proof of  (2)  in Definition \ref{def:dbt} requires us to notice first that for  $i \in \{1, 2\}$, $M_i, u \Vdash \phi^{\beta u}_{M_j}$ for all $\beta$. We argue by induction on $\beta$. The case $\beta = 0$ is trivial. If $\beta $ is a limit ordinal and, by inductive hypothesis,  $M_i, u \Vdash \phi^{\gamma u}_{M_j}$ for all $\gamma \ \textless \ \beta$, then clearly  $M_i, u \Vdash \phi^{\beta u}_{M_j}$. Finally let $\beta = \gamma +1$. By inductive hypothesis, $M_i, u \Vdash \phi^{\gamma u}_{M_j}$. But trivially both (a)  $M_i, u \Vdash \phi^{\gamma b}_{M_i} \rightarrow \bigvee_{d \in X} \phi^{\gamma d}_{M_j} $ for some world $b$ of $M_j$ and $X \subseteq W_i$ only if   $M_i, u \Vdash \phi^{\gamma b}_{M_i} \rightarrow \bigvee_{d \in X} \phi^{\gamma d}_{M_j} $, and (b) $M_i, u \Vdash (\lsim(\phi^{\gamma b}_{M_i} \rightarrow \bigvee_{d \in X} \phi^{\gamma d}_{M_j})) $ for some world $b$ of $M_j$ and $X \subseteq W_i$ only if   $M_i, u \Vdash (\lsim(\phi^{\gamma b}_{M_i} \rightarrow \bigvee_{d \in X} \phi^{\gamma d}_{M_j})) $. Hence, $M_i, u \Vdash \phi^{\beta u}_{M_j}$.

Now, suppose that  $uZ_iv$, i.e., $M_j, v \Vdash \phi^{\xi u}_{M_j}$, which implies that $M_j, v \Vdash \phi^{\xi +1 u}_{M_j}$ by choice of $\xi$. Assume further that $R_jvbc$ and consider the disjunction $\bigvee_{d \in X} \phi^{\xi d}_{M_j}$ where $d \in W_i$ is such that $M_j, c \nVdash \phi^{\xi d}_{M_j}$. By a previous observation, $M_j, b \Vdash \phi^{\xi b}_{M_i}$ and clearly $M_j, c \nVdash \bigvee_{d \in X} \phi^{\xi d}_{M_j}$, so $M_j, v \nVdash \phi^{\xi b}_{M_i} \rightarrow \bigvee_{d \in X} \phi^{\xi d}_{M_j}$. Hence, given that $M_j, v \Vdash \phi^{\xi +1 u}_{M_j}$, $M_i, u \nVdash \phi^{\xi b}_{M_i} \rightarrow \bigvee_{d \in X} \phi^{\xi d}_{M_j}$. Thus, there are $b^{\prime}, c^{\prime} \in W_i$ such that $R_iub^{\prime}c^{\prime}$, $M_i, b^{\prime} \Vdash \phi^{\xi b}_{M_i}$, i.e., $bZ_ib^{\prime}$ and $M_i, c^{\prime} \nVdash \bigvee_{d \in X} \phi^{\xi d}_{M_j}$. The latter means that if $d \in W_i$ and $M_j, c \nVdash \phi^{\xi d}_{M_j}$ then $M_i, c^{\prime} \nVdash \phi^{\xi d}_{M_j}$. Again by a previous observation $M_i, c^{\prime} \Vdash  \phi^{\xi c^{\prime}}_{M_j}$, so we see that $M_j, c \Vdash \phi^{\xi c^{\prime}}_{M_j}$ contraposing the previous sentence, i.e., $c^{\prime}Z_ic $. 

Clause (3)  in Definition \ref{def:dbt} follows as if  $i, j \in \{1, 2\}$ and $uZ_iv$, i.e., $M_j, v \Vdash \phi^{\xi u}_{M_j}$ then $M_j, v \Vdash \phi^{0 u}_{M_j}$, so every propositional variable satisfied at $u$ in $M_i$ is also satisfied at $v$ in $M_j$.

The right to left direction of the theorem follows since if $M_j, w_j \Vdash \phi^{\xi w_i}_{M_j}$ then $Z_1$ and $Z_2$ are both non-empty, so we have the required relevant directed bisimulation between $M_i$ and $M_j$. 

For the other direction if there is one such relevant directed  bisimulation $M_j, w_j \Vdash \phi^{\beta w_i}_{M_j}$ for all $\beta$, so in particular, $M_j, w_j \Vdash \phi^{\xi w_i}_{M_j}$. This can be be seen by recalling that for any $\beta$,  $M_i, w_i \Vdash \phi^{\beta w_i}_{M_j}$ and since $w_i Z_i w_j$ by assumption, $M_j, w_j \Vdash \phi^{\beta w_i}_{M_j}$  since all formulas of $L_{\lambda^+\omega}^{\rightarrow}$  are preserved under relevant directed bisimulations.
\end{proof}

\begin{Cor} \label{scott}  \emph{(Relevant Scott's Theorem)} Let   $(M_1, w_1)$ and $(M_2, w_2)$ be two models in some $K$ such that $K \subseteq V_{\bf B}$,  and suppose $L_{|2^{\omega}|^+\omega}^{\rightarrow}$ has at most $ |2^{\omega}|$ propositional variables.  Then, when $i, j \in \{1, 2\}$, there is a  formula $\theta^{w_i}$ of $L_{|2^{\omega}|^+\omega}^{\rightarrow}$ such that $M_j, w_j \Vdash \theta^{w_i}$   iff there is a relevant directed bisimulation $(Z_i, Z_j)$ between $M_i$ and $M_j$ such that $w_iZ_iw_j$. \end{Cor}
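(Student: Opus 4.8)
The plan is to read off the corollary as the special case of the preceding theorem in which the two models are countable, so that all that remains is to evaluate the cardinal parameters $\kappa$ and $\lambda$ occurring there. Since Scott's theorem lives in the denumerable setting, I take $\mbox{sup}\{|W_1|,|W_2|\}\leq\omega$; this is the only substantive hypothesis hidden in the phrase ``two models'' here, and I would state it explicitly, since without it the single index $|2^{\omega}|^+$ need not be large enough (in $\mathrm{ZFC}$ one cannot force $2^{\kappa}=2^{\omega}$ for $\kappa>\omega$).

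First I would compute $\kappa$. As $\mbox{sup}\{|W_1|,|W_2|\}\leq\omega$, the least infinite cardinal $\geq\mbox{sup}\{|W_1|,|W_2|\}$ is $\omega$ itself, so $\kappa=\omega$ and hence $2^{\kappa}=2^{\omega}=|2^{\omega}|$. Next I would compute $\lambda$. The hypothesis that there are at most $|2^{\omega}|$ propositional variables gives $|\mathtt{PROP}|\leq|2^{\omega}|=2^{\kappa}$, whence $\lambda=\mbox{sup}\{|\mathtt{PROP}|,2^{\kappa}\}=|2^{\omega}|$ and $\lambda^+=|2^{\omega}|^+$. Therefore the language $L_{\lambda^+\omega}^{\rightarrow}$ delivered by the theorem is precisely $L_{|2^{\omega}|^+\omega}^{\rightarrow}$.

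With the indices matched, I would simply invoke the preceding theorem to obtain a formula $\theta^{w_i}$ of $L_{|2^{\omega}|^+\omega}^{\rightarrow}$ satisfying its clauses (1) and (2). Clause (2) is verbatim the biconditional asserted by the corollary, so the argument is complete; clause (1), namely $M_i,w_i\Vdash\theta^{w_i}$, is what certifies that $\theta^{w_i}$ genuinely characterizes $(M_i,w_i)$, and is worth recording even though the corollary's statement suppresses it. I do not anticipate any real obstacle: every nontrivial ingredient $-$ the transfinite construction of the approximants $\phi^{\eta a}_{M_j}$, their monotonicity in the ordinal superscript, and the stabilization at some $\xi\ \textless \ \lambda^+$ forced by $|W_1\times W_2|=\kappa\ \textless \ \mbox{cf}(\lambda^+)=\lambda^+$ $-$ has already been carried out in the theorem. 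The corollary merely repackages that result under the concrete cardinal ceiling $|2^{\omega}|$, and the sole point demanding care is the routine cardinal arithmetic collapsing the two suprema above.
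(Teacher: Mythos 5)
Your proposal is correct and takes essentially the same route as the paper, which offers no separate argument for Corollary \ref{scott} at all: it is read off from the preceding theorem by the instantiation $\kappa=\omega$, $\lambda=\mbox{sup}\{|\mathtt{PROP}|,2^{\omega}\}=|2^{\omega}|$, exactly your cardinal computation, and your flag that countability of $W_1,W_2$ is tacitly assumed matches the paper's surrounding discussion (``two arbitrary countable models''), so making it explicit is a genuine improvement on the stated hypotheses. One parenthetical slip: ZFC fails to \emph{prove} $2^{\kappa}=2^{\omega}$ for $\kappa>\omega$, but it certainly can be forced ($2^{\omega_1}=2^{\omega}$ is consistent, e.g.\ under Martin's Axiom with the negation of CH), and unprovability rather than unforceability is what your point about needing the countability hypothesis actually requires.
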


\section{Interpolation, preservation and Beth definability}\label{sec:pre}

In this section, following the analogous case for modal logic \cite{bar, van}, we obtain a preservation theorem for relevant infinitary formulas as a corollary to a generalized interpolation result. Interpolation theorems have a history of implying preservation results (some examples in  infinitary logic  can be found in \cite{keis}).

Let $M$ be a structure for a language $L_{\infty \omega}$, if $X \subseteq \mbox{dom}(M)$ and $X$ is closed under all the functions in the signature of $M$, then $[X]^M$ is the submodel obtained by restricting all the relations in the signature of $M$ to $X$. Note that if $X$ fails to be closed closed under the required functions, then $[X]^M$  is not defined.

\begin{Lem} \label{lem:rel} \emph{(Relativization Lemma)} Let  $L_{\infty \omega}$ be a language with a unary predicate $P$. Then for any formula $\phi(\overline{x})$ $L_{\infty \omega}$  not containing $P$ there is a first order formula $\phi^P$ such that if $M$ is a structure where $[P^M]^M$ is defined then for every sequence $\overline{a}$ of elements from  $[P^M]^M$,
\begin{center}
 $M \vDash \phi^P[\overline{a}]$ iff $[P^M]^M \vDash \phi[\overline{a}]$.
\end{center}\end{Lem}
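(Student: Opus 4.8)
The plan is to define $\phi^P$ by a straightforward induction on the structure of the formula $\phi$, relativizing each quantifier to the predicate $P$ and leaving the rest essentially untouched. Concretely, I would set $(\psi)^P = \psi$ for atomic $\psi$ (since $\phi$ does not contain $P$, every atomic subformula is built from the other symbols of the signature), commute the relativization past the boolean connectives and the infinitary conjunctions and disjunctions, i.e. $(\lnot \psi)^P = \lnot(\psi^P)$, $(\bigwedge_{i\in I}\psi_i)^P = \bigwedge_{i\in I}(\psi_i)^P$, and similarly for $\bigvee$, and then handle the quantifier step by bounding: $(\exists y\,\psi)^P = \exists y\,(Py \wedge \psi^P)$ and $(\forall y\,\psi)^P = \forall y\,(Py \supset \psi^P)$. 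Because $L_{\infty\omega}$ admits only finitely many quantified variables, this recursion is well-founded and produces a legitimate formula $\phi^P$ of the same language augmented with $P$.

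The key step is then to verify the stated equivalence by induction on $\phi$, for all sequences $\overline{a}$ drawn from $[P^M]^M$ simultaneously. The atomic and boolean cases are immediate since relativization is the identity on atoms and commutes with $\lnot$, and the infinitary conjunction and disjunction cases follow because satisfaction of $\bigwedge$ and $\bigvee$ reduces componentwise to the satisfaction of the constituents, to which the inductive hypothesis applies. The crux is the quantifier case: here I would use the hypothesis that $[P^M]^M$ is \emph{defined}, i.e. that $P^M$ is closed under all the functions in the signature. This closure is exactly what guarantees that quantifying over elements $b$ with $M \vDash Pb$ ranges over precisely the domain of the submodel $[P^M]^M$, and that any term occurring in $\psi$ evaluated at parameters from $P^M$ again lands inside $P^M$ (so that the inductive hypothesis, which only speaks of sequences from $[P^M]^M$, is applicable to the witness $b$ together with $\overline{a}$). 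Thus $M \vDash (\exists y\,\psi)^P[\overline{a}]$ iff some $b\in P^M$ satisfies $M\vDash \psi^P[\overline{a},b]$ iff, by induction, some $b$ in the domain of $[P^M]^M$ satisfies $[P^M]^M\vDash \psi[\overline{a},b]$ iff $[P^M]^M\vDash \exists y\,\psi[\overline{a}]$, and dually for $\forall$.

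The main obstacle, and the only place where the closure hypothesis on $[P^M]^M$ is genuinely used, is ensuring that the notion of satisfaction in the submodel $[P^M]^M$ is well-behaved with respect to terms. If $P^M$ were not closed under the functions of the signature, then a term appearing in $\psi$ could, when evaluated at parameters from $P^M$, escape $P^M$, and the submodel $[P^M]^M$ would not even be well-defined as a structure; this is precisely why the lemma carries the proviso that $[P^M]^M$ be defined. Once this closure is in hand, the match-up between bounded quantification in $M$ and ordinary quantification in $[P^M]^M$ is automatic, and no further subtlety arises. I would therefore organize the write-up so that the term-closure observation is isolated at the start of the quantifier case, after which the equivalence reads off transparently from the inductive hypothesis.
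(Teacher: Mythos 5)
Your proposal is correct and is essentially the paper's proof: the paper simply cites Theorem 5.1.1 of Hodges' \emph{Model Theory}, whose proof is exactly the induction you give (relativize quantifiers to $P$, commute past connectives and infinitary $\bigwedge$/$\bigvee$, and use closure of $P^M$ under the functions so that term evaluation and witnesses stay inside the submodel). Note in passing that the lemma's phrase ``first order formula $\phi^P$'' should read ``formula of $L_{\infty\omega}$,'' and your construction correctly produces such an infinitary relativization.
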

 \begin{proof} This is just Theorem 5.1.1 from \cite{hodges}. \end{proof}

Given a language $L$, by $\Sigma^1_1(L)$ and $\Pi^1_1(L)$ we will mean the languages resulting from admitting, respectively,  second order existential quantifications in front of a formula of $L$ and second order universal quantifications in front of a formula of $L$.

\begin{Lem} \label{lem:b} If $L_{\infty \omega}$ has a signature containing a binary symbol $\textless$, $\phi(x)$ and $\psi$ are formulas of $L_{\infty \omega}$ and $\Sigma^1_1 (L_{\infty \omega})$ respectively such that for each ordinal $\alpha$  there is a model M such that $\textless^M$  is a linear ordering on $\phi(M)$ in order type $\geqslant \alpha$, then $\psi$ has a model $N$ such that $\textless^N$ is a linear ordering on $\phi(N)$ which is not well-ordered.  \end{Lem}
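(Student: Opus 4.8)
The plan is to peel off the second-order quantifier first and then reduce the statement to the classical fact that well-foundedness is not expressible in $L_{\infty\omega}$. Write $\psi = \exists S_1 \cdots \exists S_n\, \chi$, where $\chi$ is a formula of $L_{\infty\omega}$ in the signature $\sigma'$ obtained by adjoining to the original signature $\sigma$ a fresh relation symbol $S_k$ (of the matching arity) for each second-order variable. Since neither $\phi$ nor $\textless$ mentions any $S_k$, expanding a model $M$ of $\psi$ to a $\sigma'$-model $M'$ that witnesses the second-order quantifiers keeps the same domain and the same interpretations of all $\sigma$-symbols; hence $\phi(M') = \phi(M)$ and $\textless^{M'} = \textless^{M}$. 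So the hypothesis yields, for every ordinal $\alpha$, a $\sigma'$-model $M'_\alpha \models \chi$ on which $\textless$ linearly orders $\phi(M'_\alpha)$ in order type $\geq \alpha$. Conversely, any $\sigma'$-model $N' \models \chi$ whose $\phi$-part is linearly but not well-ordered by $\textless$ has a $\sigma$-reduct $N \models \psi$ (the $S_k$ witness the existential second-order quantifiers) with the \emph{same} non-well-ordered $\phi$-part. Thus it suffices to prove the lemma for the single $L_{\infty\omega}$-sentence $\chi$.

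Next I would invoke the undefinability of well-ordering in $L_{\infty\omega}$ in its quantitative, boundedness form. The sentence $\chi$ lives in some $L_{\kappa\omega}$ and has a fixed quantifier rank $\gamma$. By the result of \cite{bar1} underlying Proposition \ref{pro:equiv}, up to logical equivalence there are only set-many formulas of quantifier rank $\leq \gamma$ over this signature, so only boundedly many rank-$\gamma$ types of finite $\textless$-increasing tuples are available. The classical boundedness theorem (L\'opez--Escobar; see \cite{dick}, \cite{keis}) then furnishes an ordinal $\delta = \delta(\kappa,\gamma)$ with the property that any rank-$\leq\gamma$ sentence of $L_{\kappa\omega}$ possessing a model in which $\textless$ well-orders $\phi$ in order type $\geq \delta$ also possesses a model in which $\textless$ linearly but not well-orders $\phi$. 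Applying the hypothesis at $\alpha = \delta$ produces $M'_\delta \models \chi$ with a $\textless$-well-order of $\phi$ of type $\geq \delta$, and the theorem delivers the required $N' \models \chi$ whose $\phi$-part is non-well-ordered.

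If one wished to establish the boundedness step from scratch rather than cite it, the route I would take is the Ehrenfeucht--Mostowski method adapted to the rank-$\leq\gamma$ fragment: starting from $M'_\delta$ with $\delta$ chosen large relative to the bounded number of rank-$\gamma$ types, an Erd\H{o}s--Rado argument extracts from $\phi(M'_\delta)$ an infinite $\textless$-increasing sequence of rank-$\gamma$ indiscernibles, from which one assembles a model indexed by a non-well-ordered linear order such as $\ZZ$ or $\mathbb{Q}$ whose $\phi$-part inherits that index order and which, because only formulas of rank $\leq\gamma$ must be preserved, still satisfies $\chi$; the equivalence of $L_{\infty\omega}$-indistinguishability with back-and-forth systems (the Karp-style machinery of Theorem \ref{thm:karp}) is exactly what guarantees this transfer. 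The main obstacle is precisely this construction, since finitary Ehrenfeucht--Mostowski theory leans on compactness, which fails for $L_{\infty\omega}$; the substitute is to fix the rank bound $\gamma$, use Proposition \ref{pro:equiv} to keep the relevant types a set, and run the indiscernibility combinatorics inside that bounded fragment, where a compactness-free argument goes through. The second-order elimination and the reduct step are routine by comparison.
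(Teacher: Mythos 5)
Your proposal is correct and takes essentially the same route as the paper, whose entire proof is a one-line citation of the classical boundedness theorem for $L_{\infty \omega}$ (Theorem 11.5.4 of \cite{hodges}, or Theorem 1.8 of \cite{bar2}); your elimination of the second-order existential quantifiers by absorbing them into the signature and passing between expansions and reducts is precisely the routine reduction that the paper's word ``essentially'' leaves implicit. Your closing Ehrenfeucht--Mostowski sketch is unnecessary on this route (and is the only genuinely delicate point, since indiscernibility arguments without compactness are far less routine than you suggest), but since you offer it only as an optional alternative to the citation, the proof stands.
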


\begin{proof} This is essentially Theorem 11.5.4 in \cite{hodges} or Theorem 1. 8 in \cite{bar2}. \end{proof}

Lemma \ref{lem:b} is known as the property of the model-theoretic language $L_{\infty \omega}$ of being \emph{bounded}, a  substitute for compactness when establishing that a property is not expressible in $L_{\infty \omega}$ (\cite{hodges}, p. 581). It is a useful  property that can be seen to characterize    $L_{\infty \omega}$ in terms of expressive power  via a Lindstr\"om theorem (cf. \cite{bar1}).

Let $\langle R, S \rangle$ be a pair of binary relations between two structures $M_1$ and $M_2$, while $\phi$ and $\psi$ are formulas of $L_{\infty \omega}^{corr}$. Following \cite{bar, van} we say that \emph{$\phi$ implies $\psi$ along $\langle R, S \rangle$} if whenever $M_1RM_2$, $M_1 \vDash \phi$ only if $M_2 \vDash \psi$ and if $M_2SM_1$, $M_2 \vDash \phi$ only if $M_1 \vDash \psi$. This can be seen as a generalization of the usual notion of consequence (note that standard consequence is the case when $R$ and $S$ are the identity). When the relation in question is relevant directed bisimulations, $\phi$ implies $\psi$ along relevant directed bisimulations if when $\langle Z_1, Z_2 \rangle$ is a relevant directed bisimulation between two models $M_1$ and $M_2$, and $aZ_ib$ ($i, j \in \{1, 2\}$) for elements $a, b$ of the domains of $M_i$ and $M_j$ respectively, then $M_i \vDash \phi [a]$ only if $M_j \vDash \phi[b]$.

If $\phi$ is a formula of  $L_{\infty \omega}^{corr}$, we will write $\mathtt{PROP}_{ \phi}$ for the collection of predicates appearing in $\phi$ corresponding to propositional variables in $\mathtt{PROP}$.

\begin{Lem} \label{lem:o} Let $\phi, \psi$ be formulas of  $\Sigma^1_1(L_{\infty \omega}^{corr}), \Pi^1_1(L_{\infty \omega}^{corr})$ respectively. Suppose $\phi$  implies $\psi$ along relevant directed bisimulations for  $\mathtt{PROP}_{ \phi} \cap\mathtt{PROP}_{ \psi} $  over some class of Routley-Meyer structures  $K$ defined by some formula $\sigma$ of $L_{\infty \omega}^{corr}$.  Then there is an ordinal $\alpha$ such that for every $M, N \in K$ if $M \vDash \phi[w]$ and $u$ satisfies in $N$ all the infinitary relevant formulas of degree    $\leq \alpha$ satisfied by $w$ in $M$, then $N \vDash \psi[u]$.  \end{Lem}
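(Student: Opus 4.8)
The plan is to argue by contradiction, converting ``arbitrarily long approximations'' into ``a genuine bisimulation'' by means of the boundedness of $L_{\infty\omega}$ (Lemma \ref{lem:b}), in the spirit of the Barwise--van Benthem method. Suppose no such ordinal $\alpha$ works. Then for every ordinal $\alpha$ there are structures $M_\alpha, N_\alpha \in K$ and worlds $w_\alpha, u_\alpha$ with $M_\alpha \vDash \phi[w_\alpha]$ and $N_\alpha \nvDash \psi[u_\alpha]$, while $u_\alpha$ satisfies in $N_\alpha$ every relevant formula over $\mathtt{PROP}_{\phi} \cap \mathtt{PROP}_{\psi}$ of degree $\leq \alpha$ that $w_\alpha$ satisfies in $M_\alpha$. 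By the graded form of the relevant Karp theorem (Proposition \ref{pro:karp}), this last clause is equivalent to the existence of a relevant directed $\alpha$-bisimulation $(\langle Z_{\beta i}, Z_{\beta j}\rangle)_{\beta \leq \alpha}$ between $M_\alpha$ and $N_\alpha$ with $w_\alpha Z_{\beta j} u_\alpha$ for every $\beta \leq \alpha$.

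First I would package all of this into a single structure and a single sentence. Working in a signature with two fresh unary predicates $P_1, P_2$, a binary symbol $\textless$, and symbols coding the approximation relations, I would write a sentence $\Theta \in \Sigma^1_1(L_{\infty\omega}^{corr})$ asserting: $P_1$ and $P_2$ carve out two disjoint Routley--Meyer substructures; each lies in $K$ (using the relativizations $\sigma^{P_1}, \sigma^{P_2}$ afforded by Lemma \ref{lem:rel}); there are worlds $w \in P_1$, $u \in P_2$ with $\phi^{P_1}[w]$ and $(\neg\psi)^{P_2}[u]$, which are legitimate $\Sigma^1_1$ assertions since $\phi$ is $\Sigma^1_1$ and $\neg\psi$ is $\Sigma^1_1$ as $\psi$ is $\Pi^1_1$; $\textless$ linearly orders a set $A$; and a family of relations indexed along $\textless$ forms a directed-bisimulation approximation in which $w$ and $u$ are linked at every stage and the back-and-forth clauses (1)--(3) of Definition \ref{def:ndbt}, restricted to $\mathtt{PROP}_{\phi} \cap \mathtt{PROP}_{\psi}$, descend strictly along $\textless$. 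By the previous paragraph, for every $\alpha$ the sentence $\Theta$ has a model in which $\textless$ orders $A$ in order type $\geq \alpha$.

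Now boundedness applies. By Lemma \ref{lem:b}, with $\phi(x)$ taken to be ``$x \in A$'', the sentence $\Theta$ has a model $\mathcal{N}$ in which $\textless$ linearly but not well-orders $A$. Decoding $\mathcal{N}$ yields $M, N \in K$ and worlds $w, u$ with $M \vDash \phi[w]$ and $N \nvDash \psi[u]$. The ill-founded part of $\textless$ has no minimal element, so below each of its points lies another; exploiting this I would collapse the indexed approximation into a single pair $\langle Z_i, Z_j\rangle$ and verify that, because the descent never terminates, the clauses of Definition \ref{def:dbt} now hold relating $\langle Z_i, Z_j\rangle$ to itself, yielding a genuine relevant directed bisimulation for $\mathtt{PROP}_{\phi} \cap \mathtt{PROP}_{\psi}$ with $w Z_i u$. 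Since $\phi$ implies $\psi$ along relevant directed bisimulations for $\mathtt{PROP}_{\phi} \cap \mathtt{PROP}_{\psi}$ over $K$, we would then conclude $N \vDash \psi[u]$, contradicting $N \nvDash \psi[u]$; hence some bound $\alpha$ exists.

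The main obstacle is the faithful rendering and decoding of the approximation hierarchy. On one side, $\Theta$ must genuinely belong to $\Sigma^1_1(L_{\infty\omega}^{corr})$ so that Lemma \ref{lem:b} is applicable, which forces me to confine all of the ordinal bookkeeping to the first-order relation $\textless$ rather than to genuine ordinals, and to express membership in $K$ only through the relativized $\sigma$. On the other side, the extraction of an honest bisimulation from the non-well-founded order must be arranged so that the witnesses $b', c'$ demanded by clause (2) can always be relocated inside the ill-founded part, which is exactly what the absence of a least element guarantees. A secondary point to check is that working throughout with the shared vocabulary $\mathtt{PROP}_{\phi} \cap \mathtt{PROP}_{\psi}$ keeps the encoded relevant types and the resulting bisimulation aligned with the hypothesis that $\phi$ implies $\psi$.
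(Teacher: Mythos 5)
Your proposal is correct and follows essentially the same route as the paper's own proof: the reductio via Proposition \ref{pro:karp} to obtain $\alpha$-bisimulations for every $\alpha$, the encoding of the two structures, the relativized $\sigma$, $\phi$, $\neg\psi$ and the staged approximation relations into a single $\Sigma^1_1(L_{\infty\omega}^{corr})$ sentence using Lemma \ref{lem:rel}, the application of boundedness (Lemma \ref{lem:b}) to obtain an ill-founded ordering, and the decoding of an infinite descending chain into a genuine relevant directed bisimulation contradicting the hypothesis. Your explicit observation that $\neg\psi$ is $\Sigma^1_1$ because $\psi$ is $\Pi^1_1$, so that the combined sentence stays within the scope of Lemma \ref{lem:b}, is exactly the point the paper's construction relies on.
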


\begin{proof} Suppose  for reductio that for each $\alpha$ there are $(M_{1}, w_{1})$ and $(M_{2}, w_{ 2})$ such that $M_{1} \vDash \phi[w_{1}]$ and  $M_{ 2} \nvDash \psi[w_{ 2}]$ while  $w_{2}$ satisfies in $M_{ 2}$ all the infinitary relevant formulas of degree    $\leq \alpha$ satisfied by  $ w_{1}$ in $M_{ 1}$. Hence, by Proposition \ref{pro:karp}, there is a relevant directed $\alpha$-bisimulation $(\langle Z_{\beta 1}, Z_{\beta 2} \rangle)_{\beta \leq \alpha}$ such that $w_{1} Z_{\beta 1} w_{ 2}$ for each $\beta \leq \alpha$.

 Suppose for simplicity that $\mathtt{PROP}_{ \phi} \cap\mathtt{PROP}_{ \psi}$   has a single non-logical symbol $p$. So the  correspondence language $L_{\infty \omega}^{corr}$ has signature $K=\{*, R, P, Q_0, Q_1, \dots \}$ where $Q_i$ ($i=0, 1, \dots$) are the predicates corresponding the propositional variables not in  $\mathtt{PROP}_{ \phi} \cap\mathtt{PROP}_{ \psi}$. Expand this signature by adding the set of symbols $\{U_1, U_2, \textless , O, B_1, B_2, I, G\}$, where $U_1, U_2, , B_1, B_2$ and $O$ are  unary predicates, $\textless$ and $I$ are binary predicates, while $G$ is a ternary predicate.

Consider the infinitary formula  $\bigwedge \Theta$, where $\Theta$ is the theory containing  the following formulas:

\begin{itemize}

\item []$\sigma^{U_1}$, $\sigma^{U_2}$

\item[] \textquotedblleft There are $x, y$ such that $U_1x,  U_2y, \phi^{U_1}x, \neg \psi^{U_2}y$ and for all $z, u$ such that   $Oz, B_1u$ and $Izu$, we have that $Guxy$ \textquotedblright
\item[] \textquotedblleft $\textless$ is a discrete total ordering with first and last elements \textquotedblright

\item[] \textquotedblleft $O$ is the field of $\textless$ \textquotedblright

\item[]  \textquotedblleft If $U_ix$, then $ U_ix^*$\textquotedblright \,\,\,\,\,\, ($i \in \{1,2\}$)

\item[]  \textquotedblleft If $U_ix$ and $ Rxyz$, then $U_iy$ and $U_iz$\textquotedblright \,\,\,\,\,\, ($i \in \{1,2\}$)

\item[]  \textquotedblleft If $B_iz, Ou, Iuz$ and $Gzxy$, then $ U_ix$ and $U_jy$\textquotedblright \,\,\,\,\,\, ($i \in \{1,2\}$)

\item[]  \textquotedblleft For all $z$ such that $Oz$, there is $u$ with $B_iu$ and $Izu$\textquotedblright \,\,\,\,\,\, ($i \in \{1,2\}$)

\item[]  \textquotedblleft If $B_iz, Ou, Iuz$ and $Gzxy$, then there is $v$ such that $B_jv,  Iuv$, and  $Gvy^*x^*$\textquotedblright \,\,\,\,\,\, ($i \in \{1,2\}$)

\item[]  \textquotedblleft If $B_iz, Ou, Iuz$ and $Gzxy$, then $Px$ only if $Py$\textquotedblright \,\,\,\,\,\, ($i \in \{1,2\}$)

\item[] \textquotedblleft If $U_ix, U_jy, U_jb, U_jc, Oz, Iuz, B_iz, Gzxy,  Rybc, Ov$ and $v \  \textless \ u$, then  there are $w, w^{\prime}$ such that $Ivw, Ivw^{\prime}, B_jw, B_iw^{\prime}$ and there are $ b^{\prime}c^{\prime}$ such that $U_ib^{\prime}, U_ic^{\prime}, Rxb^{\prime}c^{\prime}, Gw b b^{\prime}$ and $Gw^{\prime}c^{\prime}c$\textquotedblright \,\,\,\,\,\, ($i \in \{1,2\}$)

\end{itemize}

The last three classes of sentences described in our presentation of $\Theta$ are simply  restatements in first order logic of conditions  appearing in the definition of a directed $\alpha$-bisimulation.  

For each ordinal $\alpha$, $\bigwedge \Theta$ has a model $M_{\alpha}$ such that the ordering $\textless^{M_{\alpha}}$ on $O^{M_{\alpha}}$ has  order type $\geqslant \alpha$.  To see this consider $(M_{1}, w_{1})$ and $(M_{2}, w_{ 2})$ as given by our reductio assumption, that is, $M_{1} \vDash \phi[w_{1}]$ and  $M_{ 2} \nvDash \psi[w_{ 2}]$ while there is a relevant directed $\alpha$-bisimulation $(\langle Z_{\beta 1}, Z_{\beta 2} \rangle)_{\beta \leq \alpha}$ such that $w_{1} Z_{\beta 1} w_{ 2}$ for each $\beta \leq \alpha$.

 We can suppose without loss of generality that $W_1 \cap W_2 = \emptyset$ (if this is not the case already simply take isomorphic copies of $M_1$ and $M_2$ satisfying the proviso).  Let $M_{\alpha}$ be any model $M_3$ such that:

\begin{itemize}
\item[] $W_{3} = W_1 \cup W_2 \cup \alpha+1 \cup \{Z_{\beta i} : \beta \leq \alpha, i \in \{1, 2\}\}$,

\item[] $R_3 = R_1 \cup R_2$,

\item[] $*_3 = *_1 \cup *_2$,

\item[] $U_{i}^{M_3} = W_i$ \,\,\,\,\,\, ($i \in \{1,2\}$),

\item[] $P^{M_3} = P^{M_1} \cup P^{M_2}$,

\item[] $Q_i^{M_3} = Q_i^{M_1} \cup Q_i^{M_2}$\,\,\,\,\,\, ($i= 0, 1, \dots$),

\item[] $B_{i}^{M_3} = \{Z_{\beta i} : \beta \leq \alpha\}$ \,\,\,\,\,\, ($i \in \{1,2\}$),

\item[] $O^{M_3}= \alpha +1$,

\item[] $\textless^{M_3}$ is the natural ordering on $\alpha +1$,

\item[] $I^{M_3}\beta y$ iff $\beta \leq \alpha$ and $y = Z_{\beta i}$ for some $i \in\{1, 2\}$,

\item[] $G^{M_3}xab$ iff  $x = Z \in \{Z_{\beta i} : \beta \leq \alpha, i \in \{1, 2\}\}$ and $aZb$.
\end{itemize}
It follows that $M_3 \vDash \bigwedge \Theta$. The sentences $\sigma^{U_1}$, $\sigma^{U_2}$ hold in $M_3$ by Lemma \ref{lem:rel}, the fact that both $M_1$ and $M_2$ make $\psi$ true, and that $[U_1^{M_3}]^{M_3} = M_1$ and $[U_2^{M_3}]^{M_3} = M_2$.

Since for each ordinal $\alpha$, $\bigwedge \Theta$ has a model $M_{\alpha}$ such that the ordering $\textless^{M_{\alpha}}$ on $O^{M_{\alpha}}$ has  order type $\geqslant \alpha$, by Lemma \ref{lem:b}, $\bigwedge \Theta$ has a model $M_4$ such that $\textless^{M_4}$ is a linear ordering which is not well ordered. This means that $O^{M_4}$ being the field of $\textless^{M_4}$ contains an infinite descending sequence:

\begin{itemize}
\item[$(*)$]  $\dots e_3\ \textless^{M_4} \  e_2\ \textless^{M_4} \ e_1 \ \textless^{M_4} \ e_0.$
\end{itemize}

 Let $M_4|K$ be the restriction of $M_4$ to the signature $K$. Now, since $M_4$ makes $\bigwedge \Theta$ hold, there are $a \in U_{1}^{M_4} $ and $b \in U_{2}^{M_4} $ such that $M_4 \vDash \phi^{U_1}[a] $ (i.e., $[U_{1}^{M_4} ]^{M_4|K} \vDash  \phi[a] $),   $M_4 \nvDash  \psi^{U_2}[b] $ (i.e., $[U_{2}^{M_4} ]^{M_4|K} \nvDash  \psi[b] $) and for all $z, u$ such that   $z \in O^{M_4}, u \in B_1^{M_4}$ and $M_4 \vDash I[zu]$, we have that $M_4 \vDash G[uab]$.

 The pair $\langle Z_1, Z_2\rangle$   defines a relevant directed bisimulation for $\mathtt{PROP}_{ \phi} \cap\mathtt{PROP}_{ \psi} $ between $[U_{1}^{M_4} ]^{M_4|K}  $ and $[U_{2}^{M_4} ]^{M_4|K}$ where 
\begin{itemize}
\item [] $xZ_1y$ iff   there is $e_n$ ($n \in \omega$) in the sequence $(*)$ such that there is $u \in B_1^{M_4}, M_4 \vDash I[e_nu]$ and $M_4 \vDash G[uxy]$,

\item [] $xZ_2y$ iff   there is $e_n$ ($n \in \omega$) in the sequence $(*)$ such that there is $u \in B_2^{M_4}, M_4 \vDash I[e_nu]$ and $M_4 \vDash G[uxy]$.
\end{itemize}

 First note that $Z_1 \neq \emptyset \neq Z_2$. For all $u$ and arbitrary $e_n$ such that   $u \in B_1^{M_4}$ and $M_4 \vDash I[e_nu]$, we have that $M_4 \vDash G[uab]$, and given that there is such a $u$, we have that $aZ_1b$. But one of the formulas in $\Theta$ implies that there is also $v \in B_2^{M_4}$  such  $M_4 \vDash I[e_nv]$ and $M_4 \vDash G[vb^{*_4}a^{*_4}]$. Hence, $aZ_1b$ and $b^{*_4}Z_2a^{*_4}$, i.e., $b^{*_{[U_{2}^{M_4} ]^{M_4|K} }}Z_2a^{*_{[U_{1}^{M_4} ]^{M_4|K} }}$.

 To show (1) in Definition \ref{def:dbt} suppose that $i \in \{1, 2\}$ and $xZ_iy$. By  essentially the argument in the above paragraph it follows that $y^{*_{[U_{j}^{M_4} ]^{M_4|K} }}Z_jx^{*_{[U_{i}^{M_4} ]^{M_4|K} }}$.

For clause (2) in Definition  \ref{def:dbt}, suppose that  $i \in \{1, 2\}$ and $xZ_iy$, so  there is $e_n$ ($n \in \omega$) in the sequence $(*)$ such that there is $u \in B_i^{M_4}, M_4 \vDash I[e_nu]$ and $M_4 \vDash G[uxy]$. Now let $R_{[U_{j}^{M_4} ]^{M_4|K}}ybc$ for some $b, c \in U_{j}^{M_4}$, i.e., $R_4ybc$ by Lemma \ref{lem:rel}. But  since $e_{n+1} \ \textless \ e_n$, there is formula in $\Theta$ which implies that there are $w, w^{\prime}$ such that $M_4 \vDash I[e_{n+1}w], M_4 \vDash I[e_{n+1}w^{\prime}], w \in B_j^{M_4}, w^{\prime} \in B_i^{M_4}$ and there are $ b^{\prime}c^{\prime}$ such that $b^{\prime}, c^{\prime} \in U_i^{M_4}, R_4xb^{\prime}c^{\prime} $(so, by  Lemma \ref{lem:rel}, $R_{[U_{i}^{M_4} ]^{M_4|K}}xb^{\prime}c^{\prime}$)$, M_4 \vDash G[w b b^{\prime}]$ and $M_4 \vDash G[w^{\prime}c^{\prime}c]$ (hence  $bZ_j b^{\prime}$ and $c^{\prime}Z_i c$).

Condition (3)  in  Definition  \ref{def:dbt} follows as if  $i \in \{1, 2\}$ and $aZ_ib$,  there is formula in $\Theta$ implying that $M_4 \vDash P[a]$ only if $M_4 \vDash P[b]$, and, by the Lemma \ref{lem:rel}, $[U_{i}^{M_4} ]^{M_4|K}  P[a]$ only if $[U_{j}^{M_4} ]^{M_4|K}  P[b]$.

Finally, since  the pair $\langle Z_1, Z_2\rangle$   defines a relevant directed bisimulation for $\mathtt{PROP}_{ \phi} \cap\mathtt{PROP}_{ \psi}$ between $[U_{1}^{M_4} ]^{M_4|K}  $ and $[U_{2}^{M_4} ]^{M_4|K}$ with $aZ_1b$, $[U_{2}^{M_4} ]^{M_4|K} \nvDash \psi[b]$ and $[U_{1}^{M_4} ]^{M_4|K} \vDash \phi[a]$ we have a contradiction with the assumption that $\phi$ implies $\psi$ along relevant directed bisimulations. Also, $[U_{1}^{M_4} ]^{M_4|K}  $ and $[U_{2}^{M_4} ]^{M_4|K}$ are in the class of models $K$ since $\psi$ holds in both by Lemma \ref{lem:rel}.

\end{proof}

\begin{Thm} \label{thm:int} \emph{(Interpolation)} Let $\phi, \psi$ be  formulas of  $\Sigma_1^1(L_{\infty \omega}^{corr})$, $\Pi_1^1(L_{\infty \omega}^{corr})$  respectively  and $K$ a class of Routley-Meyer structures axiomatizable by some formula $\sigma$ of $L_{\infty \omega}^{corr}$. Then, $\phi$ implies $\psi$ along relevant directed bisimulations for  $\mathtt{PROP}_{ \phi} \cap\mathtt{PROP}_{ \psi} $  over $K$ iff  there is a relevant  interpolant $\theta$  for $\phi$ and $\psi$ over $K$ according to the standard consequence relation, with propositional variables in  $\mathtt{PROP}_{ \phi} \cap\mathtt{PROP}_{ \psi} $. \end{Thm}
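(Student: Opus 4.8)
The plan is to establish the two implications separately, observing first that the genuinely model-theoretic content has already been extracted in Lemma~\ref{lem:o}: once that uniform bound $\alpha$ is in hand, the interpolant is built directly by collecting relevant types, and the only care needed is to keep the disjunction set-sized and confined to the shared vocabulary.

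For the easy (right-to-left) direction, I would start from a relevant interpolant $\theta$ whose variables lie in $\mathtt{PROP}_{\phi}\cap\mathtt{PROP}_{\psi}$ and which satisfies $\phi \models_K T_x(\theta)$ and $T_x(\theta) \models_K \psi$ in the standard sense. Let $\langle Z_1,Z_2\rangle$ be a relevant directed bisimulation for the common vocabulary between $M_i,M_j\in K$ with $aZ_ib$, and suppose $M_i \vDash \phi[a]$. Then $M_i \vDash T_x(\theta)[a]$, i.e. $M_i,a \Vdash \theta$ by Proposition~\ref{pro:tr}. Since $\theta$ is a relevant formula over the common vocabulary, it is preserved along $\langle Z_1,Z_2\rangle$ by the $(ii)\Rightarrow(i)$ direction of Theorem~\ref{thm:karp} (reading $\langle Z_1,Z_2\rangle$ as a relevant directed $\alpha$-bisimulation for any $\alpha \geq dg(\theta)$ via Proposition~\ref{pro:karp}); hence $M_j,b \Vdash \theta$, so $M_j \vDash T_x(\theta)[b]$ and therefore $M_j \vDash \psi[b]$. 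That is exactly consequence along relevant directed bisimulations.

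For the hard (left-to-right) direction, I would apply Lemma~\ref{lem:o} to obtain an ordinal $\alpha$ with its transfer property. For $M\in K$ and a world $w$, let $rel_{\leq\alpha}$-$tp_M(w)$ be the set of relevant formulas over $\mathtt{PROP}_{\phi}\cap\mathtt{PROP}_{\psi}$ of degree $\leq\alpha$ true at $w$ in $M$, which by Proposition~\ref{pro:equiv} can be taken to be a genuine set and which ranges over only set-many possibilities up to equivalence. I would then set
\[
\theta = \bigvee \Bigl\{ \bigwedge rel_{\leq\alpha}\mbox{-}tp_M(w) : M\in K \mbox{ and } M \vDash \phi[w] \mbox{ for some } w \Bigr\}.
\]
By Proposition~\ref{pro:equiv} the index set is a set, so $\theta$ is a bona fide formula of $L_{\infty\omega}^{\rightarrow}$ of degree $\leq\alpha$ with variables in $\mathtt{PROP}_{\phi}\cap\mathtt{PROP}_{\psi}$. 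The inclusion $\phi \models_K T_x(\theta)$ is immediate, since for $M\in K$ with $M\vDash\phi[w]$ the conjunction $\bigwedge rel_{\leq\alpha}$-$tp_M(w)$ is one of the disjuncts and holds at $w$. For $T_x(\theta)\models_K\psi$, if $N\in K$ and $N,u \Vdash \theta$ then $u$ satisfies $\bigwedge rel_{\leq\alpha}$-$tp_M(w)$ for some $M\in K$ with $M\vDash\phi[w]$, i.e. $u$ satisfies in $N$ all common-vocabulary relevant formulas of degree $\leq\alpha$ true at $w$ in $M$; by the choice of $\alpha$ this yields $N\vDash\psi[u]$. Thus $\theta$ is the required interpolant.

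The step I would watch most closely is the legitimacy of $\theta$ as a single infinitary relevant formula: both the outer disjunction and the inner conjunctions must be index-bounded, which is precisely what Proposition~\ref{pro:equiv} guarantees, and the degree accounting (conjunctions and disjunctions of degree-$\leq\alpha$ formulas remain of degree $\leq\alpha$) must be kept straight so that $\theta$ really lives in $L_{\infty\omega}^{\rightarrow}$. Beyond that the argument is bookkeeping: translating between $\Vdash$ and $\vDash$ through Proposition~\ref{pro:tr} and invoking the preservation half of Theorem~\ref{thm:karp}. The substantive reduction from bisimulation-consequence to a single ordinal bound, which rests on the boundedness of $L_{\infty\omega}$ (Lemma~\ref{lem:b}), is entirely packaged inside Lemma~\ref{lem:o}, so the theorem itself is a clean corollary.
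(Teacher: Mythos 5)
Your proposal is correct and follows essentially the same route as the paper: the right-to-left direction via preservation of the relevant interpolant under directed bisimulations (Theorem \ref{thm:karp}), and the left-to-right direction by invoking Lemma \ref{lem:o} to obtain the uniform ordinal bound $\alpha$ and then taking the interpolant to be the disjunction $\bigvee_{M \vDash \phi[w]} \bigl(\bigwedge rel_{\leq \alpha}\mbox{-}tp_M(w)\bigr)$ over the shared vocabulary, with set-hood secured by Proposition \ref{pro:equiv}. Your explicit attention to the degree bookkeeping and the translation between $\Vdash$ and $\vDash$ via Proposition \ref{pro:tr} only makes explicit what the paper leaves implicit.
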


\begin{proof}For the  right to left direction of the theorem suppose that there is a relevant infinitary interpolant $\theta$  for $\phi$ and $\psi$ over $K$ with propositional variables in  $\mathtt{PROP}_{ \phi} \cap\mathtt{PROP}_{ \psi} $. That  $\phi$ implies $\psi$ along relevant directed bisimulations for  $\mathtt{PROP}_{ \phi} \cap\mathtt{PROP}_{ \psi} $  over $K$ follows from Theorem \ref{thm:karp} and the fact that $\theta$  is an interpolant for $\phi$ and $\psi$ according to the usual consequence relation.

 For the converse, by Lemma \ref{lem:o}, we know that there is an ordinal $\alpha$ such that for every $M, N \in K$ if $M \vDash \phi[w]$ and $u$ satisfies in $N$ all the infinitary relevant formulas of degree    $\leq \alpha$ satisfied by $w$ in $M$, then $N \vDash \psi[u]$. Consider the disjunction $\bigvee_{M \vDash \phi[w]} (\bigwedge rel_{\leq \alpha}(M, w))$, where $rel_{\leq \alpha}$ is the set of all translations of formulas of $L_{\infty \omega}^{\rightarrow}$ of degree $\leq \alpha$ with propositional variables in $\mathtt{PROP}_{ \phi} \cap\mathtt{PROP}_{ \psi} $.  The class of all non-equivalent formulas of $L_{\infty \omega}^{\rightarrow}$ of degree $\leq \alpha$ is a set according to Proposition \ref{pro:equiv}. Thus,   $\bigvee_{M \vDash \phi[w]} (\bigwedge rel_{\leq \alpha}(M, w))$ is a perfectly good formula of $L^{corr}_{\infty \omega}$. This formula is the desired interpolant of $\phi$ and $\psi$. It is easy to see that $\phi$  implies  $\bigvee_{M \vDash \phi[w]} (\bigwedge rel_{\leq \alpha}(M, w))$, while the latter implies $\psi$ by choice of $\alpha$.   \end{proof}

\begin{Cor}  \emph{(Preservation)}  Let $\phi$ be a formula of  $L_{\infty \omega}^{corr}$  and $K$ a class of Routley-Meyer structures defined by some formula $\psi$ of $L_{\infty \omega}^{corr}$. Then, $\phi$ is preserved under directed bisimulations in $K$ iff  $\phi$ is equivalent to an infinitary relevant formula over $K$.

\end{Cor}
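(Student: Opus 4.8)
The plan is to read the corollary directly off the Interpolation Theorem (Theorem \ref{thm:int}) by feeding it the single formula $\phi$ twice. First I would dispose of the right-to-left direction. Suppose $\phi$ is equivalent over $K$ to $T_x(\chi)$ for some infinitary relevant formula $\chi$. Given any relevant directed bisimulation $\langle Z_1, Z_2\rangle$ between $M_1, M_2 \in K$ with $aZ_ib$ and $M_i \vDash \phi[a]$, Proposition \ref{pro:tr} converts this into $M_i, a \Vdash \chi$; the $(ii)\Rightarrow(i)$ direction of the Relevant Karp Theorem (Theorem \ref{thm:karp}) then yields $M_j, b \Vdash \chi$, and a second application of Proposition \ref{pro:tr} gives $M_j \vDash \phi[b]$. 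Hence $\phi$ is preserved under directed bisimulations in $K$.

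For the substantive direction, the key observation I would make is that ``$\phi$ is preserved under directed bisimulations in $K$'' is, verbatim, the assertion that $\phi$ implies $\phi$ along relevant directed bisimulations for $\mathtt{PROP}_{\phi}$ over $K$: once the two formulas in the definition of implication along relevant directed bisimulations are identified and $aZ_ib$ is assumed, the condition reads exactly ``$M_i \vDash \phi[a]$ only if $M_j \vDash \phi[b]$'' in both directions $i, j \in \{1, 2\}$. Since $\phi$ is a formula of $L^{corr}_{\infty \omega}$ carrying no second-order quantifiers, it may be viewed simultaneously as a formula of $\Sigma^1_1(L^{corr}_{\infty \omega})$ and of $\Pi^1_1(L^{corr}_{\infty \omega})$ by prefixing an empty block of second-order quantifiers. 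Thus the hypotheses of Theorem \ref{thm:int} are met with both the $\Sigma^1_1$-formula and the $\Pi^1_1$-formula taken to be $\phi$, noting that $\mathtt{PROP}_{\phi} \cap \mathtt{PROP}_{\phi} = \mathtt{PROP}_{\phi}$.

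Applying Theorem \ref{thm:int} then produces a relevant interpolant $\theta$ for $\phi$ and $\phi$ over $K$, according to the standard consequence relation, with propositional variables in $\mathtt{PROP}_{\phi}$. Being a relevant interpolant, $\theta$ is the translation of an infinitary relevant formula; being an interpolant for $\phi$ and $\phi$ in the ordinary sense means that $\phi$ entails $\theta$ over $K$ and $\theta$ entails $\phi$ over $K$. Therefore $\theta$ is equivalent to $\phi$ over $K$, which is precisely what the corollary asserts.

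The only thing requiring care at this stage is the bookkeeping just described, namely checking that the preservation hypothesis coincides on the nose with the interpolation hypothesis once the second formula is set equal to $\phi$, and that the interpolant delivered over the common vocabulary $\mathtt{PROP}_{\phi}$ is genuinely a relevant formula in the intended signature. I do not expect a serious obstacle here: all the real difficulty --- the boundedness argument behind Lemma \ref{lem:b}, the relativization of Lemma \ref{lem:rel}, and the construction of the infinite descending sequence in Lemma \ref{lem:o} --- has already been absorbed into the proof of Theorem \ref{thm:int}, so the corollary is a clean specialization of it.
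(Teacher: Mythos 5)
Your proposal is correct and follows exactly the paper's own route: the paper likewise derives the right-to-left direction from Theorem \ref{thm:karp} (via the translation of Proposition \ref{pro:tr}) and obtains the converse by setting the two formulas in Theorem \ref{thm:int} equal to $\phi$, viewed as both a $\Sigma^1_1$- and a $\Pi^1_1$-formula, so that the interpolant is a relevant formula sandwiched between $\phi$ and $\phi$, hence equivalent to it over $K$. Your careful bookkeeping (identifying preservation with ``$\phi$ implies $\phi$ along relevant directed bisimulations'' and noting $\mathtt{PROP}_{\phi} \cap \mathtt{PROP}_{\phi} = \mathtt{PROP}_{\phi}$) merely makes explicit what the paper's two-line proof leaves implicit.
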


\begin{proof}Right to left follows from Theorem \ref{thm:karp}. For the converse, just  set $\phi = \psi$ in Theorem \ref{thm:int}.\end{proof}

\begin{Cor}  \emph{(Beth definability)}  Let $P$ be a unary predicate not  in $L_{\infty \omega}^{corr}$, $\phi(P)$  a formula of  $L_{\infty \omega}^{corr} \cup \{P\}$  and $K$ a class of Routley-Meyer structures defined by some formula $\psi$ of  $L_{\infty \omega}^{corr} \cup \{P\}$. Then the following are equivalent:

\begin{itemize}
\item[(i)] There is a relevant formula $\theta(x)$ of $L_{\infty \omega}^{corr}$ such that $\theta(x) \equiv Px$ is a logical consequence of $\phi(P)$ in the standard classical sense.

\item[(ii)] If $(M_1, w_1, P^{M_1})$ and $(M_2, w_2, P^{M_2})$ are models of $\phi(P)$ such that $\langle Z_1, Z_2 \rangle$ is a relevant directed bisimulation between the restrictions $(M_1, w_1)$ and $(M_2, w_2)$ of $(M_1, w_1, P^{M_1})$ and $(M_2, w_2, P^{M_2})$  to  $L_{\infty \omega}^{corr}$, then $\langle Z_1, Z_2 \rangle$   is a relevant directed bisimulation between$(M_1, w_1, P^{M_1})$ and $(M_2, w_2, P^{M_2})$.

\end{itemize}

\end{Cor}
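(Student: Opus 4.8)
The plan is to run the classical derivation of Beth definability from Craig interpolation, using the Relevant Karp Theorem (Theorem~\ref{thm:karp}) for $(i)\Rightarrow(ii)$ and the relevant Interpolation Theorem (Theorem~\ref{thm:int}) for $(ii)\Rightarrow(i)$; throughout I treat $\phi(P)$ as a sentence, $x$ being the variable to be defined. For $(i)\Rightarrow(ii)$, suppose $\theta(x)$ is a relevant formula of $L_{\infty\omega}^{corr}$ with $\phi(P)\vDash\forall x(\theta(x)\equiv Px)$, and let $\langle Z_1,Z_2\rangle$ be a relevant directed bisimulation (for all of $\mathtt{PROP}$) between the $L_{\infty\omega}^{corr}$-reducts of two models of $\phi(P)$. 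Clauses (1) and (2) of Definition~\ref{def:dbt} are insensitive to the valuation and clause (3) holds for every $p\in\mathtt{PROP}$ by hypothesis, so only clause (3) for $P$ remains. If $aZ_ib$ and $M_i,a\Vdash P$, then $M_i,a\Vdash\theta$ because $M_i\vDash\phi(P)$; since $\theta$ is a relevant $P$-free formula, Theorem~\ref{thm:karp} transfers it along the bisimulation to give $M_j,b\Vdash\theta$, whence $M_j,b\Vdash P$ again by $\phi(P)$. Thus $\langle Z_1,Z_2\rangle$ survives the $P$-expansion, which is $(ii)$.

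For $(ii)\Rightarrow(i)$, adjoin to the correspondence language a fresh predicate $P'$ (not corresponding to any propositional variable) and let $\phi(P')$ be $\phi(P)$ with $P$ replaced by $P'$. I first fold the class into the formulas, replacing $\phi(P)$ by $\phi(P)\wedge\psi$ and $\phi(P')$ by $\phi(P')\wedge\psi(P')$, so that interpolation may be applied over the class of all Routley--Meyer structures; I also pad both formulas with trivially valid conjuncts mentioning each predicate of $\mathtt{PROP}$, so that their shared propositional vocabulary is exactly $\mathtt{PROP}$, and hence the bisimulations furnished by interpolation are genuine $\mathtt{PROP}$-bisimulations of the kind $(ii)$ can consume. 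Put $\Phi:=\phi(P)\wedge Px$ and $\Psi:=\phi(P')\rightarrow P'x$, which lie (with empty second-order prefix) in $\Sigma^1_1$ and $\Pi^1_1$ of the correspondence language with $P,P'$ adjoined; since $P$ occurs only in $\Phi$ and $P'$ only in $\Psi$, and neither is a propositional variable, we have $\mathtt{PROP}_\Phi\cap\mathtt{PROP}_\Psi=\mathtt{PROP}$.

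The key step is to deduce from $(ii)$ that $\Phi$ implies $\Psi$ along relevant directed bisimulations for $\mathtt{PROP}$. So let $\langle Z_1,Z_2\rangle$ be a $\mathtt{PROP}$-bisimulation between structures $\mathcal M_i,\mathcal M_j$ interpreting both $P$ and $P'$, with $aZ_ib$, $\mathcal M_i\vDash\Phi[a]$, and (to establish the implication) $\mathcal M_j\vDash\phi(P')$. Read $\mathcal M_i$ as a $P$-model $N_i$ via $P^{\mathcal M_i}$ and read $\mathcal M_j$ as a $P$-model $N_j$ via $P'^{\mathcal M_j}$. Then $N_i,N_j\vDash\phi(P)$, $a\in P^{N_i}$, and $N_i,N_j$ share the $L_{\infty\omega}^{corr}$-reducts of $\mathcal M_i,\mathcal M_j$; hence $\langle Z_1,Z_2\rangle$ is a relevant directed bisimulation between those reducts, and $(ii)$ promotes it to a bisimulation between $N_i$ and $N_j$. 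Clause (3) for $P$ applied to the pair $aZ_ib$ now yields $b\in P^{N_j}=P'^{\mathcal M_j}$, i.e. $\mathcal M_j\vDash\Psi[b]$, as required.

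With this in hand, Theorem~\ref{thm:int} supplies a relevant interpolant $\theta(x)$ whose propositional variables lie in $\mathtt{PROP}$, with $\Phi\vDash\theta(x)$ and $\theta(x)\vDash\Psi$ in the standard sense. As $P'$ is fresh and occurs neither in $\theta$ nor in $\phi(P)$, I substitute $P$ for $P'$ in the second consequence to obtain $\theta(x)\vDash\phi(P)\rightarrow Px$; together with $\phi(P)\wedge Px\vDash\theta(x)$ this is exactly $\phi(P)\vDash\forall x(\theta(x)\equiv Px)$ over $K$, which is $(i)$. The main obstacle is precisely the key step: condition $(ii)$ speaks of a single predicate in a single model, whereas interpolation forces two disjoint copies $P,P'$ inside one structure, and the reconciliation is the reindexing that reads $\mathcal M_j$'s $P'$ as the $P$ of $N_j$; it is for this that one must arrange the shared vocabulary to be all of $\mathtt{PROP}$ (the padding), so that the bisimulation produced by interpolation is of exactly the type $(ii)$ acts on. The folding of $\psi$ and the substitution of $P$ for $P'$ are the routine remainder.
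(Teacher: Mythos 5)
Your proposal is correct, and in outline it follows the paper's own derivation: for (i)$\Rightarrow$(ii) your argument is essentially verbatim the paper's (only clause (3) of Definition~\ref{def:dbt} for $P$ needs checking, and membership in $P$ transfers because (i) turns it into satisfaction of the relevant formula $\theta$, which moves along the bisimulation by the easy direction of Proposition~\ref{pro:karp}/Theorem~\ref{thm:karp}), and (ii)$\Rightarrow$(i) likewise passes through Theorem~\ref{thm:int}. The one genuine divergence is in how Theorem~\ref{thm:int} is engaged. The paper interpolates between the second-order pair $\exists P(\phi(P)\wedge Px)$ and $\forall P(\phi(P)\supset Px)$, exploiting the $\Sigma^1_1/\Pi^1_1$ generality of the interpolation theorem (this corollary is in fact the place where that generality earns its keep), whereas you keep both formulas first order by renaming: you adjoin a second fresh predicate $P'$ and interpolate between $\phi(P)\wedge Px$ and $\phi(P')\supset P'x$, recovering (i) by substituting $P$ for $P'$ at the end. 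The semantic core is identical in the two versions --- given a $\mathtt{PROP}$-bisimulation between the $P$-free reducts, one reads the two structures as models of $\phi(P)$ (in your case via $P^{M_i}$ on one side and $P'^{M_j}$ on the other), lets (ii) promote the bisimulation, and lets clause (3) for $P$ transport membership. What your route buys is explicitness about two points the paper's one-sentence proof glosses over: folding the class axiom $\psi$ into the formulas so that interpolation is applied over a trivially axiomatized class, and, more substantively, the padding that forces $\mathtt{PROP}_{\Phi}\cap\mathtt{PROP}_{\Psi}$ to be all of $\mathtt{PROP}$ --- without it, the bisimulations occurring in the ``implies along bisimulations'' hypothesis would be for the possibly smaller vocabulary of the predicates actually occurring in $\phi(P)$, and could not be fed to (ii) as stated; the paper's own pair has the same vocabulary issue and handles it silently. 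What the paper's route buys is brevity and no signature extension. One cosmetic remark: write the classical conditional in $\Psi$ as $\phi(P')\supset P'x$ rather than with $\rightarrow$, which the paper reserves for relevant implication.
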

\begin{proof} (i) $\Rightarrow$ (ii):  It suffices to show that when $(M_1, w_1, P^{M_1})$ and $(M_2, w_2, P^{M_2})$ are models of $\phi(P)$ such that $\langle Z_1, Z_2 \rangle$ is a relevant directed bisimulation between the restrictions $(M_1, w_1)$ and $(M_2, w_2)$ of $(M_1, w_1, P^{M_1})$ and $(M_2, w_2, P^{M_2})$  to  $L_{\infty \omega}^{corr}$, if $x \in P^{M_i}$ and $xZ_iy$ then $y \in P^{M_j}$. The result follows by the assumption  (i) and the easy direction of Proposition \ref{pro:karp}. 

 (ii) $\Rightarrow$ (i): It is enough to establish that  $\exists P(\phi(P) \wedge Px)$ implies $\forall P (\phi(P) \supset Px)$ along relevant directed bisimulations for  $\mathtt{PROP}_{ \exists P(\phi(P) \wedge Px)} \cap\mathtt{PROP}_{ \forall P (\phi(P) \supset Px)} $  over $K$, since then, by Theorem \ref{thm:int}, it follows that there is a relevant formula $\theta(x)$ of $L_{\infty \omega}^{corr}$ which is an interpolant  for $\exists P(\phi(P) \wedge Px)$ and $\forall P (\phi(P) \supset Px)$ over $K$ according to the standard consequence relation. Consequently,  (i) holds. 
\end{proof}

\section{Conclusion} \label{sec:con}

We have shown that many facts from the model theory of classical infinitary logic have analogues in the context of relevant logic and the Routley-Meyer semantics. In particular, versions of Karp's theorem and Scott's isomorphism theorem  can be obtained. Also, most infinitary relevant languages with absurdity are incompact, from which we can derive incompleteness of most Hilbert systems based on them  (in the sense of there being a semantic consequence of a certain set of formulas which cannot be deduced from the set in the formal system).

 We have also showed that the formulas of classical infinitary relevant logic corresponding to infinitary relevant formulas are exactly those preserved under relevant directed bisimulations. This was obtained as a consequence of a certain interpolation result, from which a Beth definability theorem followed as well.

\section*{Acknowledments}
We are grateful to two anonymous reviewers who provided very detailed comments pointing out several errors in a previous version of this paper. The editor, Dave Ripley, also offered helpful suggestions. We are indebted to Zach Weber and Patrick Girard for their constant encouragement. We wish to thank the audience at the Frontiers of Non-Classicality conference held in January 2016 at Auckland, in particular, Ed Mares, Shawn Standefer, Greg Restall and Jeremy Seligman. We acknowledge the support by the Austrian Science Fund
(FWF): project I 1923-N25 (\emph{New perspectives on residuated posets}). Finally, a Marsden Fund Grant awarded to Zach Weber by the Royal Society of New Zealand partially funded research for this paper.

\end{document}